\documentclass[reqno, 12pt]{amsart}
\usepackage{amssymb,amsmath,amsfonts,amsthm,comment,mathrsfs,times,graphicx}
\usepackage{color}
\usepackage[english]{babel}
\usepackage[all,cmtip]{xy}
\usepackage{lmodern}
\usepackage{enumitem}
\usepackage{geometry}
\usepackage{booktabs}
\geometry{hmargin=2.5cm,vmargin=1.5cm} 
\newcounter{dummy} \numberwithin{dummy}{section}
\newtheorem{theo}[dummy]{Theorem }
 
 \newtheorem{lem}[dummy]{Lemma}
 \newtheorem{pr}[dummy]{Proposition}
 
\newtheorem{ef}[dummy]{Definition}
\newtheorem{exs}[dummy]{Examples}
\newtheorem{ex}[dummy]{Example}
\newtheorem{rem}[dummy]{Remark}
\newtheorem{pr-ef}[dummy]{Proposition-Definition}

\title[$p$-rational fields and the structure of some modules]{$p$-rational fields and the structure of some modules }
\author[Abdelaziz EL Habibi,M'hammed Ziane]{Abdelaziz EL Habibi $^{(1)}$, M'hammed Ziane $^{(2)}$}
\address{$^{(1)}$ ACSA Laboratory,
Department of Mathematics, Faculty of Sciences, Mohammed First
University, Oujda, Morocco.}
\email{\textcolor[rgb]{0.00,0.00,1.00}{a.elhabibi@ump.ac.ma}}
\address{$^{(2)}$ ACSA Laboratory,
Department of Mathematics, Faculty of Sciences, Mohammed First
University, Oujda, Morocco.}
 \email{\textcolor[rgb]{0.00,0.00,1.00}{ziane12001@yahoo.fr}}
 \keywords{ Iwasawa theory, Class field theory, Cohomology of number fields, $p$-adic analytic structures.}
 \subjclass[2010]{11R23, 11R37, 11S25}
\begin{document}
\maketitle
\renewcommand{\abstractname}{Abstract}
\begin{abstract}
Assume that the field $K$ is $p$-rational. We study the freeness of
the $\Lambda(G_{\infty,S})$-module
$\mathcal{X}=\mathcal{H}^{ab}=\mathrm{\mathrm{G}al}(K_{S\cup
S_p}/K_{\infty,S})^{ab}$. For numerical evidence to our result we
consider the case of fields of the form
$\mathbb{Q}(\sqrt{pq},\sqrt{-d})$.
\end{abstract}

\section{\textbf{Introduction}}
The cohomology theory gives subtle and deep lying arithmetic laws if
we study Galois groups with restricted ramification. In this paper
we consider infinite fields with restricted ramification and try to
study closely the associated Galois groups. Let $K$ be a field which
is of finite or infinite degree over the rationales. If $\Sigma$ is
a set of places of $K$, then we denote by $K_{\Sigma}$ the maximal
pro-$p$-extension of $K$ which is unramified outside $\Sigma$ and
let $G_{\Sigma}(K)$ be its Galois group over $K$. If $K$ is a number
field and $p$ an odd prime number such that $\Sigma$ contains the
set of places of $K$ dividing $p$, then class field theory shows
that the group $G_{\Sigma}(K)^{ab}$ is isomorphic to
$\mathbb{Z}_{p}^{\rho}\times\mathrm{T}_{\Sigma}$, where $\rho$ is
the $\mathbb{Z}_p$-rank and $\mathrm{T}_\Sigma$ is the
$\mathbb{Z}_p$-torsion. Later we will study and consider examples
for which $\rho=1+r_2$ and $\mathrm{T}_{S_p}=0$, such fields are
called $p$-rational \cite{Movahhedi-Nguyen}. Moreover, it is known
that in this case $G_{\Sigma}(K)$ is of cohomological dimension at
most two. We refer the reader to \cite{Neukirch-Schmidt-Wingberg}
for a complete study of Galois groups with
restricted ramification.\\
In the following we consider extensions with restricted ramification
of infinite fields such as the cyclotomic $\mathbb{Z}_p$ of a number
field. More precisely, let $K$ be a number field and $p$ an odd
prime number. Assume that $S$ is a finite set of non-archimedean
places of $K$ and consider the extension $K_{\infty,S}$ where
$K_{\infty}$ is the cyclotomic $\mathbb{Z}_p$-extension of $K$. For
$S$ satisfying $S\cap S_p=\emptyset$ and $N(v)\equiv1\pmod{p}$ for
every places $v\in S$, the group $G_S(K_{\infty})$ has been
considered by several authors (\cite{Salle1}, \cite{Mizusawa-Ozaki},
\cite{Itoh},...), its structure is used to study the Galois group
$G_{\infty,S}:=\mathrm{Gal}(K_{\infty,S}/K)$.\\
In the present paper, the Galois groups we study are considered as
modules over the complete Iwasawa algebra
$\Lambda(G_{\infty,S})=\mathbb{Z}_p[[G_{\infty,S}]]$ define by:
$$\Lambda(G_{\infty,S})=\mathbb{Z}_p[[G_{\infty,S}]]:=\varprojlim_{U}\mathbb{Z}_p[G_{\infty,S}/U],$$
the projective limite with respect to the open normal sub-groups.

Let $K_{S\cup S_p}$ be the maximal pro-$p$-extension of $K$ which is
unramified outside $S\cup S_p$ with Galois group $G_{S\cup
S_p}(K)=\mathrm{Gal}(K_{S\cup S_p}/K)$. We consider the normal
subgroup $\mathcal{H}=\mathrm{Gal}(K_{S\cup S_p}/K_{\infty,S})$ of
$G_{S\cup S_p}(K)$, its abelianizer $\mathcal{X}$ may be endowed
with a structure of $\Lambda(G_{\infty,S})$-module. Using Nakayama's
lemma, we obtain that $\mathcal{X}$ is finitely generated over
$\Lambda(G_{\infty,S})$ (see the proof of Theorem \ref{theo3}). In
the present paper we study the freeness of the
$\Lambda(G_{\infty,S})$-module $\mathcal{X}$. Roughly speaking, we
prove (Theorem \ref{theo3}) that the $\Lambda(G_{\infty,S})$-module
$\mathcal{X}$ is free of rank $r_2$ by using the following theorem
\cite[Proposition 5.6.13]{Neukirch-Schmidt-Wingberg}:

\begin{theo}
\label{theo} Let $\mathcal{G}$ be a pro-$p$-group with finite
presentation and cohomological dimension at most $2$. Let
$\mathcal{H}$ be a normal closed subgroup of $\mathcal{G}$ with
$H^2(\mathcal{H},\mathbb{Q}_p/\mathbb{Z}_p)=0$. Let
$G=\mathcal{G}/\mathcal{H}$ and $\mathcal{X}=\mathcal{H}^{ab}$.
Suppose that $\mathcal{X}$ is finitely generated under
$\Lambda(G)$.\\
Suppose that $cd(G)\leq 2$ and $H^2(\mathcal{G},
\mathbb{Q}_p/\mathbb{Z}_p)=0$. If the natural morphism
$Tor_{\mathbb{Z}_p}\mathcal{G}^{ab}\longrightarrow G^{ab}$ is
injective, then $\Lambda(G)$-module $\mathcal{X}$ is free.
\end{theo}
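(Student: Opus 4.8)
The plan is to reduce the freeness of $\mathcal{X}$ to the vanishing of a single $\mathrm{Tor}$-group and then to eliminate that obstruction using the cohomological hypotheses in turn. Since $G$ is pro-$p$, the algebra $\Lambda(G)$ is local with residue field $\mathbb{F}_p$, so a finitely generated $\Lambda(G)$-module is free as soon as it is projective; by the topological Nakayama lemma applied to a minimal system of generators this is equivalent to $\mathrm{Tor}_1^{\Lambda(G)}(\mathbb{F}_p,\mathcal{X})=0$. Applying $\mathrm{Tor}^{\Lambda(G)}_\bullet(-,\mathcal{X})$ to $0\to\mathbb{Z}_p\xrightarrow{p}\mathbb{Z}_p\to\mathbb{F}_p\to0$ and using $\mathrm{Tor}_i^{\Lambda(G)}(\mathbb{Z}_p,\mathcal{X})=H_i(G,\mathcal{X})$, one obtains a short exact sequence
$$0\to H_1(G,\mathcal{X})/p\to \mathrm{Tor}_1^{\Lambda(G)}(\mathbb{F}_p,\mathcal{X})\to \mathcal{X}_G[p]\to 0,$$
where $\mathcal{X}_G=H_0(G,\mathcal{X})$ is the module of coinvariants. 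Thus it suffices to prove $H_1(G,\mathcal{X})=0$ and $\mathcal{X}_G[p]=0$.

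First I would establish $H_1(G,\mathcal{X})=0$. The identification $H^1(\mathcal{H},\mathbb{Q}_p/\mathbb{Z}_p)=\mathrm{Hom}(\mathcal{H}^{ab},\mathbb{Q}_p/\mathbb{Z}_p)=\mathcal{X}^{\vee}$ exhibits the Pontryagin dual of $\mathcal{X}$ as a cohomology module of $\mathcal{H}$. Because $\mathcal{H}$ is a closed subgroup of $\mathcal{G}$ we have $cd(\mathcal{H})\leq cd(\mathcal{G})\leq 2$, so $H^j(\mathcal{H},\mathbb{Q}_p/\mathbb{Z}_p)=0$ for $j\geq 3$; together with the hypothesis $H^2(\mathcal{H},\mathbb{Q}_p/\mathbb{Z}_p)=0$ this leaves only the rows $j=0$ and $j=1$ in the Hochschild--Serre spectral sequence $E_2^{i,j}=H^i(G,H^j(\mathcal{H},\mathbb{Q}_p/\mathbb{Z}_p))\Rightarrow H^{i+j}(\mathcal{G},\mathbb{Q}_p/\mathbb{Z}_p)$. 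Since $cd(G)\leq 2$ the term $E_2^{1,1}=H^1(G,\mathcal{X}^{\vee})$ carries no nonzero differential, hence equals $E_\infty^{1,1}$, which is a graded piece of $H^2(\mathcal{G},\mathbb{Q}_p/\mathbb{Z}_p)=0$; therefore $H^1(G,\mathcal{X}^{\vee})=0$. Dualizing through $H^1(G,\mathcal{X}^{\vee})\cong H_1(G,\mathcal{X})^{\vee}$ yields $H_1(G,\mathcal{X})=0$.

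The harder half is the torsion statement $\mathcal{X}_G[p]=0$, and this is where the injectivity hypothesis is decisive. The duality $H^n(\mathcal{G},\mathbb{Q}_p/\mathbb{Z}_p)\cong H_n(\mathcal{G},\mathbb{Z}_p)^{\vee}$ turns $H^2(\mathcal{G},\mathbb{Q}_p/\mathbb{Z}_p)=0$ into $H_2(\mathcal{G},\mathbb{Z}_p)=0$, so the five-term homology sequence of $1\to\mathcal{H}\to\mathcal{G}\to G\to 1$ collapses to
$$0\to H_2(G,\mathbb{Z}_p)\to \mathcal{X}_G\to \ker(\mathcal{G}^{ab}\to G^{ab})\to 0.$$
I would then show that both outer terms are $\mathbb{Z}_p$-torsion-free. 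The quotient $\ker(\mathcal{G}^{ab}\to G^{ab})$ is torsion-free exactly because the injectivity of $Tor_{\mathbb{Z}_p}\mathcal{G}^{ab}\to G^{ab}$ means $Tor_{\mathbb{Z}_p}\mathcal{G}^{ab}\cap\ker(\mathcal{G}^{ab}\to G^{ab})=0$, so this kernel embeds into the torsion-free quotient $\mathcal{G}^{ab}/Tor_{\mathbb{Z}_p}\mathcal{G}^{ab}$. The submodule $H_2(G,\mathbb{Z}_p)$ is torsion-free because $cd(G)\leq 2$ forces multiplication by $p$ to be surjective on $H^2(G,\mathbb{Q}_p/\mathbb{Z}_p)$ (the obstruction lies in $H^3(G,\mathbb{F}_p)=0$), so that group is divisible and its dual $H_2(G,\mathbb{Z}_p)$ is torsion-free. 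An extension of torsion-free $\mathbb{Z}_p$-modules is torsion-free, whence $\mathcal{X}_G$ is torsion-free and in particular $\mathcal{X}_G[p]=0$.

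Combining the two vanishings gives $\mathrm{Tor}_1^{\Lambda(G)}(\mathbb{F}_p,\mathcal{X})=0$, and the local-ring criterion of the first paragraph then gives the freeness of $\mathcal{X}$. I expect the main obstacle to be the torsion analysis of the coinvariants $\mathcal{X}_G$: producing the correct three-term sequence relating $\mathcal{X}_G$, $\mathcal{G}^{ab}$ and $G^{ab}$, and recognizing that the hypothesis on $Tor_{\mathbb{Z}_p}\mathcal{G}^{ab}$ is precisely what makes the relevant subquotient of $\mathcal{G}^{ab}$ torsion-free. By contrast, the spectral-sequence vanishing and the Pontryagin duality identifications are comparatively formal once the three cohomological dimensions are known to be at most $2$.
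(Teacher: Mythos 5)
Your proof is correct and follows essentially the same route as the paper: your criterion via $\mathrm{Tor}_1^{\Lambda(G)}(\mathbb{F}_p,\mathcal{X})=0$ is the paper's minimal-presentation criterion (freeness $\Leftrightarrow$ $H_1(G,\mathcal{X})=0$ and $\mathcal{X}_G$ $\mathbb{Z}_p$-free) in equivalent form, your degeneration of the Hochschild--Serre spectral sequence at $E_2^{1,1}$ is exactly the content of the paper's seven-term homology lemma, and your torsion analysis of the sequence $0\to H_2(G,\mathbb{Z}_p)\to\mathcal{X}_G\to\ker(\mathcal{G}^{ab}\to G^{ab})\to 0$ matches the paper's final step. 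If anything, you make explicit details the paper leaves terse, namely the torsion-freeness of $\ker(\mathcal{G}^{ab}\to G^{ab})$ via the injectivity hypothesis and of $H_2(G,\mathbb{Z}_p)$ via the divisibility of $H^2(G,\mathbb{Q}_p/\mathbb{Z}_p)$ forced by $cd(G)\leq 2$.
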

We are consedering fields $K$ which are $p$-rational in the sense of
\cite{Movahhedi-Nguyen}, \cite{Gras-Jaulent}, \cite{Gras1} and
\cite{Greenberg}. In particular, a number field $K$ is said to be
$p$-rational if the Galois group $G_{S_p}(K)$ is a free
pro-$p$-group on $r_2+1$ generators. The assumption that $K$ is
$p$-rational with the fact that the number $s$ of places $v$ of
$K_\infty$ above places in $S$ is one, gives that $G_{\infty, S}$ is
a $p$-adic analytic group without $p$-torsion.\\
For the statement of the results, we need some standard notations.
Let $\mathcal{U}_v$ be the $p$-adic compactification of the group of
units of the ring of integers of $K_v$ and denote
$\mathcal{U}_p=\prod_{v\in S_p} \mathcal{U}_v$ there product for the
primes above $p$. If $T$ is a finite set of places of $K$, let
$$\iota_T: \mathcal{E}\longrightarrow \prod_{v \in T} \mathcal{U}_v,$$ the local embedding with respect to element of $T$ of
$\mathcal{E}=\mathbb{Z}_p\otimes E_K$ the $p$-adic completion of the
group of units of $K$. Let $\mathcal{E}_T$ be the kernel of
$\iota_T$, we denote $\iota_T$ by $\iota_p$ when $T=S_p$.\\
Using the above notations we can state the following result:
\begin{theo}(Theorem \ref{theo3})
Let $K$ be a $p$-rational field not containing the $p^{th}$-roots of
unity and satisfying the following conditions:
\begin{enumerate}
    \item  The group
    $i_{p}(\mathcal{E}_S)$  is a direct summand of
    $\mathcal{U}_{p}$.
    \item $s=1$.
\end{enumerate}
Then the $\Lambda(G_{\infty, S})$-module $\mathcal{X}$ is free of
rank $r_2$.
\end{theo}
Assume that $\mathcal{G}=G_{S\cup S_p}(K)$ and $G=G_{\infty, S}$
which are pro-$p$-groups. It is know that the cohomological
dimension of $G_{S\cup S_p}$ is at most two. Since $\mathcal{H}$ is
a normal subgroup of $\mathcal{G}$ such that
$H^2(\mathcal{H},\mathbb{Q}_p/\mathbb{Z}_p)=0$ (see \cite[theorem
2.2]{Nguyen3}), and $G_{\infty, S}$ is of cohomological dimension at
most two (see the proof of theorem \ref{theo3}), we can apply
theorem \ref{theo} to these groups where condition (1) of theorem
\ref{theo3} gives the injection
$$Tor_{\mathbb{Z}_p}G^{ab}_{S\cup S_p}(K)\longrightarrow G_{\infty, S}^{ab}.$$
We use the reciprocity map in the pro-$p$-version of global class
field theory to control the kernel of the above map (\cite[Theorem 2.3]{Jaulent} and \cite[Proposition 1.2]{Salle2}).\\
One remark that Proposition \ref{proposition7} gives the
equivalence:
\begin{equation*}
\mathcal{X}\; \hbox{is free}\; \Leftrightarrow\;
\mathrm{Tor}_{\mathbb{Z}_p}\Big(\frac{\prod_{v\in
S_p}\tilde{\mathcal{U}}_v}{\imath_p(\mathcal{E}_S)\cap
 \prod_{v\in S_p} \tilde{\mathcal{U}}_v}\Big)=\{1\}.
\end{equation*}
As an application of our result we consider the fields of the form
$\mathbb{Q}(\sqrt{pq},\sqrt{-d})$, which we suppose to be
$p$-rational, where $q$ is an odd prime number and $p>3$. Using
pari-GP and the technique described in example \ref{example2} we
numerical examples for such fields.\\\\
 \textbf{Acknowledgement.}
The first author would like to thank Christian Maire for many
helpful discussions and remakes on a first draft of this paper. I am
also grateful to Aurel Page for improving my program on PariGP and
Bill Allombert for inviting me many times to the Atelier PariGP.
\begin{center}\textbf{Notations}
\end{center}
Fix a prime number $p>2$ and a number field $K$. We use the
following notation:
\begin{itemize}
    \item $\mu_p$ denote the set of all $p$th roots of unity.
    \item $S_p=\{\mathfrak{p} \in Pl_K, \mathfrak{p}/p \}$.
    \item $S$ : finite set of primes of $K$ which is disjoint from
    $S_p$.
    \item $E_K$ denote the unit group of $K$.
    \item $K_{\infty}$ : the cyclotomic
$\mathbb{Z}_p$-extension of $K$ and put
$\Gamma=\mathrm{Gal}(K_{\infty}/K)$. We denote by $K_n$ the $n$th
layer of $K_{\infty}/K$.
   \item $K_{\infty,S}$ : the maximal pro-$p$ extension of $K_{\infty}$ unramified
   outside $S$.
   \item If $\Sigma$ is any finite set of places of $K$, denote by
   $K_{\Sigma}$ the maximal pro-$p$ extension of $K$ unramified
   outside $\Sigma$ and put
   $G_{\Sigma}(K)=\mathrm{Gal}(K_{\Sigma}/K)$. Let denote
   $\mathfrak{X}_{\Sigma}(K):=\mathrm{Gal}(K_{\Sigma}/K)^{ab}$.
   \item If $A$ is a
$\mathbb{Z}_p$-module, denote by $d_pA:=\mathrm{dim}_{\mathbb{F}_p}
A/A^p$ the $p$-rank of A.
  \item If $G$ is a profinite group, $H$ a closed subgroup of $G$
  and $M$ a compact $\mathbb{Z}_p[[H]]$-module, then $\mathrm{Ind}_G^H M :=
  M\hat{\otimes}_{\mathbb{Z}_p[[H]]} \mathbb{Z}_p[[G]]$ denotes the
  compact induction of $M$ from $H$ to $G$.
\end{itemize}
\section{\textbf{Preliminaries}}
The results of this paragraph figure in \cite{Brumer},
\cite{Neukirch-Schmidt-Wingberg}, \cite{Serre}, \cite{Nguyen2} ...\\
 Let $p$ be a prime number. Let $G$ be a pro-$p$-group. Consider the complete Iwasawa
algebra of $G$ over $\mathbb{Z}_p$
$$\Lambda(G)=\mathbb{Z}_p[[G]]:=\varprojlim_{U}\mathbb{Z}_p[G/U],$$
the projective limite with respect to the open normal sub-groups.\\
 The compact $\mathbb{Z}_p$-algebra is a
local ring with maximal ideal $m_{G}$ generated by the augmentation
ideal $I_{G}$ and $p\Lambda(G)$.\\
Let $\mathcal{C}$ be the abelian category of compact
$\Lambda(G)$-modules. In the following all considered modules
$\mathcal{X}$ lies in the category $\mathcal{C}$. Let recall the
crucial Nakayama lemma
\begin{lem}
A $\Lambda(G)$-module $\mathcal{X}\in\mathcal{C}$ is generated by
$r$ elements if and only if the $\mathbb{F}_p$-vector space
$\mathcal{X}/m_G$ has dimension $r$.
\end{lem}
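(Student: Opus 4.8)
The plan is to deduce this statement from the topological (pseudocompact) form of Nakayama's lemma, where the displayed quotient is read as $\mathcal{X}/m_G\mathcal{X}$ and ``generated by $r$ elements'' means that $r$ is the minimal number of topological generators of $\mathcal{X}$ over $\Lambda(G)$. First I would record the ring-theoretic input already available here: since $G$ is a pro-$p$-group, $\Lambda(G)$ is local with maximal ideal $m_G=(I_G,p\Lambda(G))$ and residue field $\Lambda(G)/m_G\cong\mathbb{F}_p$. Consequently $\mathcal{X}/m_G\mathcal{X}$ is a module over $\Lambda(G)/m_G=\mathbb{F}_p$, i.e. an $\mathbb{F}_p$-vector space, so $\dim_{\mathbb{F}_p}\mathcal{X}/m_G\mathcal{X}$ is well defined. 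The easy implication is then immediate: if $x_1,\dots,x_r$ topologically generate $\mathcal{X}$ over $\Lambda(G)$, their images span $\mathcal{X}/m_G\mathcal{X}$ over $\mathbb{F}_p$, whence $\dim_{\mathbb{F}_p}\mathcal{X}/m_G\mathcal{X}\le r$.

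For the substantial direction I would first establish the \emph{vanishing form} of the lemma: if $\mathcal{X}\in\mathcal{C}$ satisfies $m_G\mathcal{X}=\mathcal{X}$, then $\mathcal{X}=0$. This is where compactness enters. Writing $\mathcal{X}=\varprojlim_{U}\mathcal{X}/U$ over the open $\Lambda(G)$-submodules $U$, each quotient $\mathcal{X}/U$ is finite and the continuous $\Lambda(G)$-action on it factors through a finite local Artinian quotient of $\Lambda(G)$ whose maximal ideal is nilpotent. The hypothesis $m_G\mathcal{X}=\mathcal{X}$ descends to $m_G\cdot(\mathcal{X}/U)=\mathcal{X}/U$, so ordinary Nakayama over that Artinian quotient forces $\mathcal{X}/U=0$ for every $U$, and hence $\mathcal{X}=0$.

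With the vanishing form in hand I would lift a basis. Assuming $\dim_{\mathbb{F}_p}\mathcal{X}/m_G\mathcal{X}=r$, choose $x_1,\dots,x_r\in\mathcal{X}$ whose images form an $\mathbb{F}_p$-basis of $\mathcal{X}/m_G\mathcal{X}$, and let $\mathcal{Y}$ be the image of the continuous map $\Lambda(G)^r\to\mathcal{X}$ sending $(\lambda_i)_i\mapsto\sum_i\lambda_i x_i$. As a continuous image of a compact module, $\mathcal{Y}$ is a closed submodule, namely the submodule topologically generated by the $x_i$, so $\mathcal{X}/\mathcal{Y}$ again lies in $\mathcal{C}$. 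By construction $\mathcal{X}=\mathcal{Y}+m_G\mathcal{X}$, hence $m_G(\mathcal{X}/\mathcal{Y})=\mathcal{X}/\mathcal{Y}$; the vanishing form gives $\mathcal{X}/\mathcal{Y}=0$, so $\mathcal{X}=\mathcal{Y}$ is generated by $r$ elements. Combining this with the easy implication yields the equivalence and the minimality: $r$ generators force the dimension to be at most $r$, while a basis admits no proper spanning subset, so the minimal number of topological generators coincides with $\dim_{\mathbb{F}_p}\mathcal{X}/m_G\mathcal{X}$.

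I expect the main obstacle to be the topological bookkeeping inside the vanishing form rather than any deep idea: one must check explicitly that open submodules of a compact $\Lambda(G)$-module have finite index, that the action on each finite quotient factors through a finite local ring with nilpotent maximal ideal so that classical Nakayama applies, and that the submodule $\mathcal{Y}$ is genuinely closed so that $\mathcal{X}/\mathcal{Y}$ remains an object of $\mathcal{C}$. These facts are standard for pseudocompact modules (cf. \cite{Brumer}), but they are exactly the points where the profinite setting differs from the classical finitely generated one and so deserve to be invoked carefully.
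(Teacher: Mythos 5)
Your proof is correct, and it coincides with the argument the paper implicitly relies on: the paper states this lemma without proof, recalling it as standard from \cite{Brumer} and \cite{Neukirch-Schmidt-Wingberg}, and your route --- the vanishing form via the inverse limit over open submodules, reducing to finite quotients over a finite local ring with nilpotent maximal ideal, then lifting an $\mathbb{F}_p$-basis and applying the vanishing form to $\mathcal{X}/\mathcal{Y}$ --- is exactly the classical topological Nakayama argument of those references (cf.\ \cite[Lemma 5.2.18]{Neukirch-Schmidt-Wingberg}). Your readings of $\mathcal{X}/m_G$ as $\mathcal{X}/m_G\mathcal{X}$ and of ``generated by $r$ elements'' as minimal topological generation are the intended ones, and the closedness and finite-index points you flag are indeed the only bookkeeping required.
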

\begin{ef}
If $\mathcal{X}$ is a free and finitely generated module over
$\Lambda(G)$, then its $\Lambda(G)$-rank is the unique integer
$\rho_{\mathcal{X}}$ satisfying
$$\mathcal{X}\simeq\Lambda(G)^{\rho_{\mathcal{X}}}.$$
\end{ef}
\subsection{Structure and cohomological dimension of $\mathcal{X}$}
Let $G$ be a pro-$p$-group. We say that $G$ has cohomological
dimension $cd(G)=n$ if the group $H^{n+1}(G,\mathbb{F}_p)$ vanishes
while $H^n(G,\mathbb{F}_p)$ is not.\\
We have the following useful equivalence for free pro-$p$-groups:
\begin{pr}
A non-trivial pro-$p$-group $G$ is said to be a free pro-$p$-group
if and only if $H^2(G,\mathbb{Q}_p/\mathbb{Z}_p)=0$ and $G^{ab}$ has
no torsion.
\end{pr}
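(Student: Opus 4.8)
The plan is to reduce the statement to the classical fact that a non-trivial pro-$p$-group $G$ is free if and only if its cohomological dimension satisfies $cd(G)\le 1$, i.e. $H^2(G,\mathbb{F}_p)=0$ (see \cite{Serre}, \cite{Neukirch-Schmidt-Wingberg}). With the definition of $cd$ recalled above, everything then comes down to relating the vanishing of $H^2(G,\mathbb{F}_p)$ to the two hypotheses $H^2(G,\mathbb{Q}_p/\mathbb{Z}_p)=0$ and the torsion-freeness of $G^{ab}$.

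For the direct implication I would argue as follows. If $G$ is a non-trivial free pro-$p$-group, then $cd(G)=1$, so $H^2(G,A)=0$ for every discrete $p$-torsion $G$-module $A$; in particular $H^2(G,\mathbb{Q}_p/\mathbb{Z}_p)=0$. Moreover the abelianization of a free pro-$p$-group is a free abelian pro-$p$-group, i.e. a (possibly infinite) product of copies of $\mathbb{Z}_p$, which is torsion-free. This disposes of one direction without real difficulty.

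For the converse I would exploit the short exact sequence of trivial $G$-modules
\[
0\longrightarrow \mathbb{F}_p \longrightarrow \mathbb{Q}_p/\mathbb{Z}_p \xrightarrow{\;\cdot p\;} \mathbb{Q}_p/\mathbb{Z}_p\longrightarrow 0,
\]
in which the first map identifies $\mathbb{F}_p$ with the $p$-torsion subgroup $\tfrac{1}{p}\mathbb{Z}_p/\mathbb{Z}_p$. The induced long exact sequence in continuous cohomology yields
\[
H^1(G,\mathbb{Q}_p/\mathbb{Z}_p)\xrightarrow{\;\cdot p\;} H^1(G,\mathbb{Q}_p/\mathbb{Z}_p)\xrightarrow{\;\delta\;} H^2(G,\mathbb{F}_p)\longrightarrow H^2(G,\mathbb{Q}_p/\mathbb{Z}_p),
\]
and the hypothesis $H^2(G,\mathbb{Q}_p/\mathbb{Z}_p)=0$ makes the connecting map $\delta$ surjective, so that it identifies $H^2(G,\mathbb{F}_p)$ with the cokernel $H^1(G,\mathbb{Q}_p/\mathbb{Z}_p)/p\,H^1(G,\mathbb{Q}_p/\mathbb{Z}_p)$.

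The heart of the argument — and the step I expect to be the main obstacle — is to show that this cokernel vanishes precisely because $G^{ab}$ has no torsion. Here I would rewrite $H^1(G,\mathbb{Q}_p/\mathbb{Z}_p)=\mathrm{Hom}(G^{ab},\mathbb{Q}_p/\mathbb{Z}_p)=(G^{ab})^{\vee}$, the Pontryagin dual of the profinite abelian group $G^{ab}$. Since multiplication by $p$ on the dual is the transpose of multiplication by $p$ on $G^{ab}$, and Pontryagin duality is exact, one gets $(G^{ab})^{\vee}/p\cong (G^{ab}[p])^{\vee}$, where $G^{ab}[p]$ is the $p$-torsion of $G^{ab}$. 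As $G^{ab}$ is torsion-free, $G^{ab}[p]=0$, hence the cokernel is trivial and $H^2(G,\mathbb{F}_p)=0$, i.e. $cd(G)\le 1$; the classical criterion then forces $G$ to be free. The only delicate point is the bookkeeping of this $p$-divisibility through duality (equivalently, checking directly that $\mathrm{Hom}(M,\mathbb{Q}_p/\mathbb{Z}_p)$ is $p$-divisible whenever $M$ has no $p$-torsion), which is exactly where the role of the torsion hypothesis becomes visible.
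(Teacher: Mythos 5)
Your proof is correct, and it is precisely the Pontryagin-dual mirror of the paper's argument: the paper applies \emph{homology} to the coefficient sequence $0\to\mathbb{Z}_p\xrightarrow{\,p\,}\mathbb{Z}_p\to\mathbb{F}_p\to0$ and extracts $0\to H_2(G,\mathbb{Z}_p)/p\to H_2(G,\mathbb{F}_p)\to G^{ab}[p]\to0$, i.e.\ the identity $d_pH_2(G,\mathbb{F}_p)=d_pH_2(G,\mathbb{Z}_p)+d_pG^{ab}[p]$, whereas you apply \emph{cohomology} to the dual sequence $0\to\mathbb{F}_p\to\mathbb{Q}_p/\mathbb{Z}_p\xrightarrow{\,p\,}\mathbb{Q}_p/\mathbb{Z}_p\to0$ and identify $H^2(G,\mathbb{F}_p)$ with $\bigl(G^{ab}[p]\bigr)^{\vee}$ under the hypothesis. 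The trade-offs are worth recording. Your version uses the hypothesis $H^2(G,\mathbb{Q}_p/\mathbb{Z}_p)=0$ directly in the long exact sequence, so you never need the translation $H_2(G,\mathbb{Z}_p)\cong H^2(G,\mathbb{Q}_p/\mathbb{Z}_p)^{\ast}$ on which the paper's homological formulation implicitly rests; you also write out both implications, including the appeal to Serre's criterion ($G$ free $\Leftrightarrow H^2(G,\mathbb{F}_p)=0$) and the torsion-freeness of the abelianization of a free pro-$p$-group, steps the paper leaves unsaid (its displayed identity is really only the key computation, not a complete proof). In the other direction, the paper's formula packages both implications simultaneously; but note that to deduce $H^2(G,\mathbb{Q}_p/\mathbb{Z}_p)=0$ from freeness along that route one must pass from $H_2(G,\mathbb{Z}_p)/p=0$ to $H_2(G,\mathbb{Z}_p)=0$ by a topological Nakayama argument over $\mathbb{Z}_p$, which your direct argument via $cd(G)=1$ avoids. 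Your duality bookkeeping is sound: dualizing the exact sequence $0\to G^{ab}[p]\to G^{ab}\xrightarrow{\,p\,}G^{ab}$ gives $(G^{ab})^{\vee}\xrightarrow{\,p\,}(G^{ab})^{\vee}\to\bigl(G^{ab}[p]\bigr)^{\vee}\to0$, exactly as you claim; and since $G^{ab}$ is a pro-$p$ abelian group, hence a $\mathbb{Z}_p$-module, its torsion is automatically $p$-primary, so your tacit interchange of ``no torsion'' with $G^{ab}[p]=0$ is harmless.
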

\noindent{\textbf{Proof}.} Consider the exact sequence
$\displaystyle{1 \longrightarrow \mathbb{Z}_p
\longrightarrow\mathbb{Z}_p
\longrightarrow \mathbb{Z}_p/p\mathbb{Z}_p \longrightarrow 1,}$\\
 used
to obtain the homology exact sequence
\begin{center}
\[
  \xymatrix @C=2pc{...H_2(G, \mathbb{Z}_p)\ar@[>][r]^p&H_2(G, \mathbb{Z}_p)\ar@[>][r]&H_2(G, \mathbb{F}_p)\ar@[>][r]&H_1(G,
  \mathbb{Z}_p)\ar@[>][r]^p&H_1(G, \mathbb{Z}_p)...}
\]
\end{center}
and hence
$$H_2(G, \mathbb{Z}_p)/p\hookrightarrow H_2(G, \mathbb{F}_p)\longrightarrow G^{ab}[p]\longrightarrow 1.$$
It follows that $d_pH_2(G, \mathbb{F}_p)=d_pH_2(G,
\mathbb{Z}_p)+d_pG^{ab}[p]$. \hfill $\square$\vskip 7pt Also we have
the following lemma,
\begin{lem}
Let $G$ be a pro-$p$-group such that for any $i\geq0$ the groups
$H_i(G, \mathbb{Z}_p)$ are finitely generated as
$\mathbb{Z}_p$-modues. Then $H_2(G, \mathbb{Z}_p)=0$ if
and only if $rg_{\mathbb{Z}_p}G^{ab}=1-\chi_2(G).$\\
Where $\chi_2(G)$ is the Euler-Poincaré characteristic of $G$ with
coefficients in $\mathbb{F}_p$.
\end{lem}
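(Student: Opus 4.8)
The plan is to reduce the stated equivalence to a statement about the $p$-rank of $H_2(G,\mathbb{Z}_p)$ and then invoke Nakayama's lemma. Since $H_0(G,\mathbb{F}_p)=\mathbb{F}_p$, the truncated Euler--Poincar\'e characteristic unwinds as $\chi_2(G)=1-d_pH_1(G,\mathbb{F}_p)+d_pH_2(G,\mathbb{F}_p)$, so that $1-\chi_2(G)=d_pH_1(G,\mathbb{F}_p)-d_pH_2(G,\mathbb{F}_p)$. The whole argument therefore amounts to rewriting the two right-hand $p$-ranks in terms of the invariants of $G^{ab}=H_1(G,\mathbb{Z}_p)$ and of $H_2(G,\mathbb{Z}_p)$.

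First I would write the finitely generated $\mathbb{Z}_p$-module $G^{ab}$ as $\mathbb{Z}_p^{r}\oplus T$, with $r=\mathrm{rg}_{\mathbb{Z}_p}G^{ab}$ and $T$ its torsion submodule. Reducing modulo $p$ gives $d_pH_1(G,\mathbb{F}_p)=d_p(G^{ab}/p)=r+d_pT$, whereas the $p$-torsion satisfies $d_pG^{ab}[p]=d_pT[p]=d_pT$. Next I would feed these into the short exact sequence established in the preceding proposition, namely $H_2(G,\mathbb{Z}_p)/p\hookrightarrow H_2(G,\mathbb{F}_p)\twoheadrightarrow G^{ab}[p]$, which yields $d_pH_2(G,\mathbb{F}_p)=d_pH_2(G,\mathbb{Z}_p)+d_pG^{ab}[p]=d_pH_2(G,\mathbb{Z}_p)+d_pT$. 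Substituting both expressions, the two copies of $d_pT$ cancel and one is left with the clean identity $1-\chi_2(G)=r-d_pH_2(G,\mathbb{Z}_p)$.

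Finally I would combine this identity with Nakayama's lemma to conclude. It shows that $\mathrm{rg}_{\mathbb{Z}_p}G^{ab}=1-\chi_2(G)$ holds if and only if $d_pH_2(G,\mathbb{Z}_p)=0$, i.e.\ $H_2(G,\mathbb{Z}_p)/p=0$. Since $H_2(G,\mathbb{Z}_p)$ is finitely generated over $\mathbb{Z}_p$ by hypothesis, Nakayama's lemma upgrades $H_2(G,\mathbb{Z}_p)/p=0$ to $H_2(G,\mathbb{Z}_p)=0$, which is precisely the desired equivalence.

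The only genuinely delicate point is the bookkeeping of the torsion: one must verify that the torsion contribution $d_pT$ to $d_p(G^{ab}/p)$ agrees with $d_pG^{ab}[p]$ so that the two terms cancel exactly. The finiteness hypotheses on the $H_i(G,\mathbb{Z}_p)$ are exactly what guarantee that all these ranks are finite, that the truncated characteristic is well defined, and that Nakayama applies to $H_2(G,\mathbb{Z}_p)$.
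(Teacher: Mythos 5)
Your proof is correct and follows exactly the route the paper intends: the paper states this lemma without a separate proof, but the short exact sequence $H_2(G,\mathbb{Z}_p)/p\hookrightarrow H_2(G,\mathbb{F}_p)\twoheadrightarrow G^{ab}[p]$ and the resulting identity $d_pH_2(G,\mathbb{F}_p)=d_pH_2(G,\mathbb{Z}_p)+d_pG^{ab}[p]$ are established in the immediately preceding proposition, and your bookkeeping with $G^{ab}\cong\mathbb{Z}_p^{r}\oplus T$ (the two $d_pT$ terms cancelling) together with Nakayama's lemma applied to the finitely generated $\mathbb{Z}_p$-module $H_2(G,\mathbb{Z}_p)$ is precisely the computation that identity is set up to yield. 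Nothing is missing.
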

In particular if the group $G$ is isomorphic to
$\mathbb{Z}_p\rtimes\mathbb{Z}_p$, the group $H_2(G, \mathbb{Z}_p)$
is trivial if and only if the product
$\mathbb{Z}_p\rtimes\mathbb{Z}_p$ is not direct.
\begin{pr}
Let $\mathcal{X}$ is a finitely generated $\Lambda(G)$-module. Then
$\mathcal{X}$ is free if and only if $H_1(G,\mathcal{X})$ is trivial
and $\mathcal{X}_G:=\mathcal{X}/I_G$ is $\mathbb{Z}_p$-free.
\end{pr}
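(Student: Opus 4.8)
The plan is to interpret the homological invariants concretely: write $H_i(G,M)=\mathrm{Tor}_i^{\Lambda(G)}(\mathbb{Z}_p,M)$ for $M\in\mathcal{C}$, so that $\mathcal{X}_G=H_0(G,\mathcal{X})=\mathbb{Z}_p\otimes_{\Lambda(G)}\mathcal{X}=\mathcal{X}/I_G\mathcal{X}$. The forward implication is the easy one. If $\mathcal{X}\simeq\Lambda(G)^{\rho}$ is free, then since $\Lambda(G)$ is free over itself we have $\mathrm{Tor}_i^{\Lambda(G)}(\mathbb{Z}_p,\Lambda(G))=0$ for $i\geq1$ and $\mathbb{Z}_p\otimes_{\Lambda(G)}\Lambda(G)=\mathbb{Z}_p$; hence $H_1(G,\mathcal{X})=H_1(G,\Lambda(G))^{\rho}=0$ and $\mathcal{X}_G=\mathbb{Z}_p^{\rho}$ is $\mathbb{Z}_p$-free.

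For the reverse implication, suppose $H_1(G,\mathcal{X})=0$ and $\mathcal{X}_G$ is $\mathbb{Z}_p$-free. Since $\mathcal{X}$ is finitely generated, Nakayama's lemma lets me fix a minimal system of $r=\dim_{\mathbb{F}_p}\mathcal{X}/m_G\mathcal{X}$ generators, giving a presentation $0\to R\to\Lambda(G)^{r}\to\mathcal{X}\to0$ with relation module $R$; the goal is to prove $R=0$. Minimality means the induced map $\Lambda(G)^{r}/m_G\to\mathcal{X}/m_G\mathcal{X}$ is an isomorphism of $\mathbb{F}_p$-vector spaces, so $R\subseteq m_G\Lambda(G)^{r}$. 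Applying the $G$-homology long exact sequence to this presentation and using $H_1(G,\Lambda(G)^{r})=0$ together with the hypothesis $H_1(G,\mathcal{X})=0$, I obtain the short exact sequence of coinvariants
$$0\longrightarrow R_G\longrightarrow \big(\Lambda(G)^{r}\big)_G=\mathbb{Z}_p^{r}\longrightarrow \mathcal{X}_G\longrightarrow0.$$

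Now both hypotheses enter the endgame. Because $\mathcal{X}_G$ is $\mathbb{Z}_p$-free, this sequence remains exact after applying $-\otimes_{\mathbb{Z}_p}\mathbb{F}_p$, so $R_G/pR_G\hookrightarrow\mathbb{F}_p^{r}$. On the other hand, from $m_G=I_G+p\Lambda(G)$ the image of $m_G\Lambda(G)^{r}$ under $\Lambda(G)^{r}\to\mathbb{Z}_p^{r}$ is $p\mathbb{Z}_p^{r}$; hence the inclusion $R\subseteq m_G\Lambda(G)^{r}$ forces $R_G\subseteq p\mathbb{Z}_p^{r}$, so the composite $R_G/pR_G\to\mathbb{F}_p^{r}$ is the zero map. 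Combining injectivity with vanishing yields $R_G/pR_G=0$, and since $R_G$ is a finitely generated $\mathbb{Z}_p$-module (a submodule of $\mathbb{Z}_p^{r}$), Nakayama over $\mathbb{Z}_p$ gives $R_G=0$, i.e. $R=I_GR\subseteq m_GR$. A final application of topological Nakayama for compact $\Lambda(G)$-modules forces $R=0$, so $\mathcal{X}\simeq\Lambda(G)^{r}$ is free. The main obstacle, and the point where the argument must be handled with care, is exactly this last coordination: the freeness of $\mathcal{X}_G$ is needed to preserve exactness modulo $p$, while the minimality of the presentation (through $R\subseteq m_G\Lambda(G)^{r}$ and the identity $m_G=I_G+p\Lambda(G)$) is what kills the image; neither hypothesis alone suffices, and one must be attentive that $H_1(G,-)$ computes the correct derived functor so that the coinvariant sequence is genuinely short exact.
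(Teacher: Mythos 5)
Your proof is correct and takes essentially the same route as the paper: the paper's own proof likewise starts from a minimal presentation $0\to N\to\Lambda(G)^{r}\to\mathcal{X}\to 0$ and the induced exact sequence $0\to H_1(G,\mathcal{X})\to N_G\to\mathbb{Z}_p^{r}\to\mathcal{X}_G\to 1$ (using $H_1(G,\Lambda(G)^r)=0$), then concludes in one terse line what you spell out in full. The two details you make explicit --- that minimality plus $m_G=I_G+p\Lambda(G)$ forces the image of $N_G$ into $p\mathbb{Z}_p^{r}$, and that $\mathbb{Z}_p$-freeness of $\mathcal{X}_G$ keeps the coinvariant sequence exact after reduction mod $p$, so Nakayama (over $\mathbb{Z}_p$, then topologically over $\Lambda(G)$) kills $N$ --- are precisely what the paper's one-sentence conclusion leaves implicit.
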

\noindent{\textbf{Proof}.} Let  $0 \longrightarrow N \longrightarrow
\Lambda(G)^r \longrightarrow \mathcal{X} \longrightarrow 1$
 be a minimal presentation of $\mathcal{X}$, which gives the following
sequence
\begin{center}
\[
  \xymatrix @C=2pc{
  \ar@[>][r]&H_1(G, \mathcal{X})\ar@[>][r]&N_G\ar@[>][r]&\mathbb{Z}_p^r\ar@[>][r]&\mathcal{X}_G\ar@[>][r]&1}.
  \]
\end{center}
The fact that $\mathcal{X}$ is a projective $\Lambda(G)$-module
gives the conclusion. \hfill $\square$\vskip 7pt
\begin{lem}\cite[Chapter II ,\S4 ,Exercise 4]{Neukirch-Schmidt-Wingberg}
Let $\mathcal{G}$ be a pro-$p$-group and $\mathcal{H}$ be a normal
closed subgroup of $\mathcal{G}$ with
$H^2(\mathcal{H},\mathbb{Q}_p/\mathbb{Z}_p)=0$. Let
$G:=\mathcal{G}/\mathcal{H}$. then there is an exact sequence in
homology
\[
  \xymatrix @C=2pc{H_3(G, \mathbb{Z}_p)\ar@[>][r]&H_1(G, \mathcal{H}^{ab})\ar@[>][r]&H_2(\mathcal{G},\mathbb{Z}_p)
  \ar@[>][r]&H_2(G,\mathbb{Z}_p)\ar@[>][r]&\mathcal{H}^{ab}_G\ar@[>][r]&\mathcal{G}^{ab}\ar@{->>}[r]&G^{ab}}
  \]
\end{lem}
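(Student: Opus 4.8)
The plan is to realize this exact sequence as the low-degree exact sequence of the Lyndon--Hochschild--Serre spectral sequence in homology attached to the extension of pro-$p$-groups $1\to\mathcal{H}\to\mathcal{G}\to G\to 1$, taken with trivial coefficients $\mathbb{Z}_p$. Recall that this spectral sequence has the form $E^2_{i,j}=H_i(G,H_j(\mathcal{H},\mathbb{Z}_p))$, converging to $H_{i+j}(\mathcal{G},\mathbb{Z}_p)$, where $G$ acts on the $\mathbb{Z}_p$-homology of $\mathcal{H}$ by conjugation. First I would identify the two bottom rows: $H_0(\mathcal{H},\mathbb{Z}_p)=\mathbb{Z}_p$ carries the trivial $G$-action, so the row $j=0$ is $E^2_{i,0}=H_i(G,\mathbb{Z}_p)$, while $H_1(\mathcal{H},\mathbb{Z}_p)=\mathcal{H}^{ab}$ gives the row $j=1$ as $E^2_{i,1}=H_i(G,\mathcal{H}^{ab})$. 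These already supply all the edge terms appearing in the statement, namely $H_2(G,\mathbb{Z}_p)$, $H_3(G,\mathbb{Z}_p)$, $H_1(G,\mathcal{H}^{ab})$, and $\mathcal{H}^{ab}_G=H_0(G,\mathcal{H}^{ab})$, together with the abutment terms $\mathcal{G}^{ab}=H_1(\mathcal{G},\mathbb{Z}_p)$ and the edge quotient $G^{ab}=H_1(G,\mathbb{Z}_p)$.

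The decisive input is to convert the cohomological hypothesis $H^2(\mathcal{H},\mathbb{Q}_p/\mathbb{Z}_p)=0$ into the homological vanishing needed to kill the row $j=2$. For this I would invoke the duality between homology and cohomology of profinite groups: since $\mathbb{Q}_p/\mathbb{Z}_p$ is the Pontryagin dual of $\mathbb{Z}_p$ and Pontryagin duality is exact, there is a natural isomorphism $H^n(\mathcal{H},\mathbb{Q}_p/\mathbb{Z}_p)\cong H_n(\mathcal{H},\mathbb{Z}_p)^{\vee}$ for every $n$. In particular the hypothesis gives $H_2(\mathcal{H},\mathbb{Z}_p)=0$, whence $E^2_{i,2}=H_i(G,0)=0$ for all $i$. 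It is precisely this vanishing row that upgrades the usual five-term sequence to the seven-term sequence of the lemma.

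With the row $j=2$ annihilated, I would extract the sequence from the filtration on the abutment. In the relevant corner the only nonzero second differentials are $d^2\colon E^2_{2,0}\to E^2_{0,1}$, that is $H_2(G,\mathbb{Z}_p)\to\mathcal{H}^{ab}_G$, and $d^2\colon E^2_{3,0}\to E^2_{1,1}$, that is $H_3(G,\mathbb{Z}_p)\to H_1(G,\mathcal{H}^{ab})$; one checks that no higher differential $d^r$ with $r\ge 3$ can reach $E_{0,1}$, $E_{1,1}$ or $E_{2,0}$, since the potential sources and targets lie outside the first quadrant or in the already-vanished row, so these terms stabilize on the $E^3$-page. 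Because $E^\infty_{0,2}=0$, the degree-two filtration yields $0\to E^\infty_{1,1}\to H_2(\mathcal{G},\mathbb{Z}_p)\to E^\infty_{2,0}\to 0$, while the degree-one filtration yields $0\to E^\infty_{0,1}\to\mathcal{G}^{ab}\to G^{ab}\to 0$. Substituting $E^\infty_{1,1}=\mathrm{coker}\big(H_3(G,\mathbb{Z}_p)\to H_1(G,\mathcal{H}^{ab})\big)$, $E^\infty_{2,0}=\ker\big(H_2(G,\mathbb{Z}_p)\to\mathcal{H}^{ab}_G\big)$, and $E^\infty_{0,1}=\mathrm{coker}\big(H_2(G,\mathbb{Z}_p)\to\mathcal{H}^{ab}_G\big)$, I would splice these two short exact sequences along the differentials $d^2$ to obtain exactly the seven-term sequence ending in the surjection $\mathcal{G}^{ab}\twoheadrightarrow G^{ab}$.

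The main obstacle is genuinely only the second step: the translation of $H^2(\mathcal{H},\mathbb{Q}_p/\mathbb{Z}_p)=0$ into $H_2(\mathcal{H},\mathbb{Z}_p)=0$ via profinite Pontryagin duality, which is what justifies discarding the row $j=2$. Everything afterward is the standard bookkeeping of a first-quadrant homological spectral sequence with a vanishing row; the only points demanding care are the homological conventions, namely the increasing filtration on the abutment and the differentials $d^r\colon E^r_{i,j}\to E^r_{i-r,\,j+r-1}$, and the verification that the low-degree terms are unaffected by higher differentials.
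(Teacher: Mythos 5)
Your proof is correct and is, up to Pontryagin duality, the same argument as the paper's: the paper runs the Hochschild--Serre spectral sequence cohomologically with discrete coefficients $\mathbb{Q}_p/\mathbb{Z}_p$, uses the hypothesis $H^2(\mathcal{H},\mathbb{Q}_p/\mathbb{Z}_p)=0$ to kill $E_2^{0,2}$ and obtain the seven-term cohomological sequence, and only dualizes at the very end, whereas you dualize the hypothesis at the outset to get $H_2(\mathcal{H},\mathbb{Z}_p)=0$ and extract the same seven-term sequence directly from the homological spectral sequence by splicing the filtration of the abutment. Since this is essentially the same approach with duality applied at a different stage, no further comparison is needed.
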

\noindent{\textbf{Proof.}}
 we use the Hochschild-Serre spectral sequence
$$H^i(G, H^j(\mathcal{H},
\mathbb{Q}_p/\mathbb{Z}_p))\Longrightarrow H^{i+j}(\mathcal{G},
\mathbb{Q}_p/\mathbb{Z}_p)$$ which gives the seven terms exact
sequence
\begin{center}
\[
  \xymatrix @C=3pc{0\ar@[>][r]&E^{1,0}_2\ar@[>][r]&E^1\ar@[>][r]&E^{0,1}_2
  \ar@[>][r]^{d^{0,1}_2}&E^{2,0}_2\ar@[>][r]&E^{2}_1\ar@[>][r]&E^{1,1}_2\ar@{>}[r]^{d^{1,1}_2}&E^{3,0}_2},
  \]
\end{center}
 where $E^{1,1}_3=\ker d^{1,1}_2$. Then :
\begin{center}
\[
  \xymatrix @C=1.5pc{0\ar[r]&H^1(G,\mathbb{Q}_p/\mathbb{Z}_p)\ar[r]& H^1(\mathcal{G},
\mathbb{Q}_p/\mathbb{Z}_p)\ar[r]&H^1(\mathcal{H},\mathbb{Q}_p/\mathbb{Z}_p)^G\ar[r]&
H^2(G,\mathbb{Q}_p/\mathbb{Z}_p)\ar[d]\\
 & & H^3(G,\mathbb{Q}_p/\mathbb{Z}_p) & H^1(G,H^1(\mathcal{H},\mathbb{Q}_p/\mathbb{Z}_p)) \ar[l] & E^2_1 \ar[l] }
\]
\end{center}
By definition $E_{\infty}^{0,2}$ is a subgroup of
$E_{2}^{0,2}=H^2(\mathcal{H},\mathbb{Q}_p/\mathbb{Z}_p)$, which is
trivial by hypotheses, moreover $E^2/E^2_{1}(=E^{0,2}_{\infty})$ is
also trivial. We obtain that
$E_{1}^2=E^2=H^2(\mathcal{G},\mathbb{Q}_p/\mathbb{Z}_p)$ and it
suffices to apply the Pontryagin duality
\begin{center}
\[
  \xymatrix @C=3pc{H_3(G, \mathbb{Z}_p)\ar[r]&
 H_1(G,H_1(\mathcal{H},\mathbb{Z}_p))\ar[r]&H_2(\mathcal{G},
\mathbb{Z}_p)\ar[r]& H_2(G,\mathbb{Z}_p)\ar[d]\\
 0& H_1(G,\mathbb{Z}_p) \ar[l] & H_1(\mathcal{G},
\mathbb{Z}_p) \ar[l] & H_1(\mathcal{H}, \mathbb{Z}_p)_G \ar[l]  }
\]
\end{center}
\hfill $\square$\vskip 7pt \noindent{\textbf{Proof of Theorem
\ref{theo}.}} Since $cd(G)\leq 2$, we have an exact sequence
\begin{center}
\[
  \xymatrix @C=2pc{0
  \ar@[>][r]&H_2(G,\mathbb{Z}_p)\ar@[>][r]^p&H_2(G,\mathbb{Z}_p)\ar@[>][r]&H_2(G,\mathbb{F}_p)},
\]
\end{center}
and from our assumption, Thus $H_2(G,\mathbb{Z}_p)$ is free of
finite rank as a $\mathbb{Z}_p$-module, $H_1(G, \mathcal{X})$ is
trivial. The spectral sequence $H^i(G, H^j(\mathcal{H},
\mathbb{Q}_p/\mathbb{Z}_p))$ $\Longrightarrow$ $H^{i+j}(\mathcal{G},
\mathbb{Q}_p/\mathbb{Z}_p)$ gives the exact sequence
\begin{center}
\[
  \xymatrix @C=2pc{0
  \ar@[>][r]&H_2(G,\mathbb{Z}_p)\ar@[>][r]&\mathcal{X}_G\ar@[>][r]&\mathcal{G}^{ab}\ar@[>][r]&G^{ab}\ar@[>][r]&0}
  \]
\end{center}
(using $H_2(\mathcal{G}, \mathbb{Z}_p)$) if moreover the natural
morphism $Tor_{\mathbb{Z}_p}\mathcal{G}^{ab}\longrightarrow G^{ab}$
is injective, then, $\mathcal{X}_G$ is $\mathbb{Z}_p$-free.
 \hfill $\square$\vskip 7pt
This proposition is an improvement Theorem \ref{theo} by giving the
other implication:
\begin{pr}
\label{propostion1} Under the hypotheses of the theorem \ref{theo},
suppose that $cd(G)\leq2$ and that $H_2(\mathcal{G},
\mathbb{Z}_p)=H_2(G, \mathbb{Z}_p)=0$. Then the $\Lambda(G)$-module
$\mathcal{X}$ is free if and only if the morphism
$Tor_{\mathbb{Z}_p}\mathcal{G}^{ab}\longrightarrow G^{ab}$ is
injective.
\end{pr}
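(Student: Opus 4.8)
The plan is to reduce the freeness of $\mathcal{X}$ to a purely $\mathbb{Z}_p$-linear statement about $\mathcal{X}_G$ by invoking the freeness criterion proved above (the proposition asserting that a finitely generated $\mathcal{X}$ is free if and only if $H_1(G,\mathcal{X})=0$ and $\mathcal{X}_G=\mathcal{X}/I_G$ is $\mathbb{Z}_p$-free), and then to read the injectivity condition directly off the homology exact sequence furnished by the Lemma from \cite[Ch. II]{Neukirch-Schmidt-Wingberg}. In effect one implication is already Theorem \ref{theo}; the new content is the converse, and the strategy is to show that \emph{both} directions pass through the single intermediate condition ``$\mathcal{X}_G$ is $\mathbb{Z}_p$-free''.

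First I would feed the hypotheses into the exact sequence
$$H_3(G,\mathbb{Z}_p)\to H_1(G,\mathcal{X})\to H_2(\mathcal{G},\mathbb{Z}_p)\to H_2(G,\mathbb{Z}_p)\to \mathcal{X}_G\to \mathcal{G}^{ab}\twoheadrightarrow G^{ab}.$$
Since $cd(G)\le 2$ forces $H_3(G,\mathbb{Z}_p)=0$, and since $H^2(\mathcal{G},\mathbb{Q}_p/\mathbb{Z}_p)=0$ dualizes to $H_2(\mathcal{G},\mathbb{Z}_p)=0$, the left end of the sequence yields $H_1(G,\mathcal{X})=0$ with no further assumption. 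Bringing in the extra hypothesis $H_2(G,\mathbb{Z}_p)=0$ then collapses the tail of the sequence to the short exact sequence
$$0\to \mathcal{X}_G\to \mathcal{G}^{ab}\to G^{ab}\to 0,$$
identifying $\mathcal{X}_G$ with $\ker(\mathcal{G}^{ab}\to G^{ab})$. By the freeness criterion, $H_1(G,\mathcal{X})=0$ is now automatic, so $\mathcal{X}$ is free if and only if $\mathcal{X}_G$ is $\mathbb{Z}_p$-free, and the whole problem reduces to testing torsion-freeness of this kernel.

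The remaining step is elementary. Because $\mathcal{G}$ has finite presentation, $\mathcal{G}^{ab}$ is a finitely generated $\mathbb{Z}_p$-module, so its submodule $\mathcal{X}_G$ is $\mathbb{Z}_p$-free precisely when it is torsion-free, i.e. when $\mathcal{X}_G\cap \mathrm{Tor}_{\mathbb{Z}_p}\mathcal{G}^{ab}=\{0\}$. Using $\mathcal{X}_G=\ker(\mathcal{G}^{ab}\to G^{ab})$, this intersection equals $\ker\bigl(\mathrm{Tor}_{\mathbb{Z}_p}\mathcal{G}^{ab}\to G^{ab}\bigr)$, so $\mathcal{X}_G$ is torsion-free if and only if the natural map $\mathrm{Tor}_{\mathbb{Z}_p}\mathcal{G}^{ab}\to G^{ab}$ is injective. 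Chaining the two equivalences gives the proposition: injectivity forces $\mathcal{X}_G$ torsion-free hence $\mathcal{X}$ free (the content of Theorem \ref{theo}), while conversely freeness of $\mathcal{X}$ forces $\mathcal{X}_G$ torsion-free hence injectivity.

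I expect the only delicate points to be the bookkeeping that deduces $H_3(G,\mathbb{Z}_p)=0$ from $cd(G)\le 2$ (passing from the dimension bound for torsion coefficients to vanishing of $\mathbb{Z}_p$-homology in degree three, via duality together with control of the relevant $\varprojlim^{1}$), and the clean identification $\mathcal{X}_G\cong\ker(\mathcal{G}^{ab}\to G^{ab})$, which genuinely uses the added hypothesis $H_2(G,\mathbb{Z}_p)=0$: without it one is left with an extension of $\ker(\mathcal{G}^{ab}\to G^{ab})$ by the free module $H_2(G,\mathbb{Z}_p)$, and the converse implication would need an extra splitting argument. Everything else is formal once the short exact sequence is in hand.
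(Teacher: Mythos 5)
Your proposal is correct and takes essentially the same route as the paper: both run the homology exact sequence of the Lemma through the hypotheses $cd(G)\leq 2$ and $H_2(\mathcal{G},\mathbb{Z}_p)=H_2(G,\mathbb{Z}_p)=0$ to obtain $H_1(G,\mathcal{X})=0$ and the short exact sequence $0\to\mathcal{X}_G\to\mathcal{G}^{ab}\to G^{ab}\to 0$, then apply the freeness criterion ($\mathcal{X}$ free iff $H_1(G,\mathcal{X})=0$ and $\mathcal{X}_G$ is $\mathbb{Z}_p$-free) and the elementary identification of $\mathrm{Tor}_{\mathbb{Z}_p}\mathcal{X}_G$ with $\ker\bigl(\mathrm{Tor}_{\mathbb{Z}_p}\mathcal{G}^{ab}\to G^{ab}\bigr)$. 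If anything you are slightly more careful than the paper, which compresses the deduction of $H_1(G,\mathcal{X})=0$ (you correctly derive it from $H_3(G,\mathbb{Z}_p)=0$, via $cd(G)\leq 2$, together with $H_2(\mathcal{G},\mathbb{Z}_p)=0$, rather than from $H_2(G,\mathbb{Z}_p)=0$ alone).
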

\noindent{\textbf{Proof}.} We have that $H_2(G, \mathbb{Z}_p)=0$,
then $H_1(G,\mathcal{X})$ is trivial, and the spectral sequence
$H^i(G, H^j(\mathcal{H}, \mathbb{Q}_p/\mathbb{Z}_p))$
$\Longrightarrow$ $H^{i+j}(\mathcal{G}, \mathbb{Q}_p/\mathbb{Z}_p))$
gives the exact sequence
$$\xymatrix @C=2pc{0
  \ar@[>][r]&\mathcal{X}_G\ar@[>][r]&\mathcal{G}^{ab}\ar@[>][r]&G^{ab}\ar@[>][r]&1}$$
hence $\mathcal{X}_G$ is $\mathbb{Z}_p$-free if and only if the
morphism $Tor_{\mathbb{Z}_p}\mathcal{G}^{ab}\longrightarrow G^{ab}$
is injective.\hfill $\square$\vskip 7pt
\subsection{On analytic pro-$p$-groups}
The main references for this part are: Lazard \cite{Lazard}, Dixon,
Du Sautoy, Segal, Mann \cite{Dixon-Du Sautoy-Mann-Segal}, ...
\begin{ef}
A topological group $G$ is $p$-adic analytic if $G$ has a structure
of $p$-adic analytic manifold for which the morphism $(G,
G)\longrightarrow G$: $(x, y)\longrightarrow xy^{-1}$ is analytic.
 \end{ef}
Let $G$ be a pro-$p$-group. If $G$ is $p$-adic analytic, then
$\Lambda(G)$ is noetherian (see \cite[V.2.2.4]{Lazard}). If more $G$
is without $p$-torsion, then
$\Lambda(G)$ is without zero divisor (see \cite{Neumann}).\\
 If $\Lambda(G)$ is
Noetherian and without zero-divisors we can form a skew field $Q(G)$
of fractions of $\Lambda(G)$ (see \cite[chapter
9]{Goodearl-Warfield}). This allows us to define the rank of a
$\Lambda(G)$-module:
\begin{ef}
 Suppose that $G$ is $p$-adic analytic group without $p$-torsion, the $\Lambda(G)$-rank of a finitely generated
$\Lambda(G)$-module $\mathcal{X}$, is defined by
 $$rank_{\Lambda(G)}(\mathcal{X}):=dim_{Q(G)}(Q(G)\otimes_{\Lambda(G)}\mathcal{X}).$$
\end{ef}
\begin{theo}(\cite[proposition 1.1 and theorem 1.4]{Nguyen3})
\label{theo1} Let $\mathcal{G}$ be a pro-$p$-group with finite
presentation and cohomological dimension at most $2$. Let
$\mathcal{H}$ be a normal closed subgroup of $\mathcal{G}$ and
$G:=\mathcal{G}/\mathcal{H}$ is a $p$-adic analytic group without
$p$-torsion. Put $\mathcal{X}=\mathcal{H}^{ab}$. Then $\mathcal{X}$
and $H_2(\mathcal{H},\mathbb{Z}_p)$ are finitely generated under
$\Lambda(G)$ and
$$rank_{\Lambda(G)}(\mathcal{X})=-\chi(\mathcal{G})+ \delta_{G, 1} +rank_{\Lambda(G)}(H_2(\mathcal{H},\mathbb{Z}_p))
,$$ where $\chi(\mathcal{G})=\sum_{i\geq 0}dim_{\mathbb{F}_p}H_i(G,
\mathbb{F}_p)$ is the Euler-Poincaré characteristic of G and
$\delta_{G, 1}=1$ if $G$ is trivial and zero otherwise.
\end{theo}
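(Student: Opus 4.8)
The plan is to reduce everything to a single finite free resolution over the larger Iwasawa algebra $\Lambda(\mathcal{G})$ and to read off both the finite generation and the rank formula from that one complex. The starting point is Shapiro's lemma in the category of compact modules: since $\mathcal{H}$ is closed and normal with $\mathcal{G}/\mathcal{H}=G$, the compact induction of the trivial module is
$$\mathrm{Ind}_{\mathcal{H}}^{\mathcal{G}}\mathbb{Z}_p=\mathbb{Z}_p\,\hat{\otimes}_{\Lambda(\mathcal{H})}\Lambda(\mathcal{G})=\Lambda(\mathcal{G})/I_{\mathcal{H}}\Lambda(\mathcal{G})=\Lambda(G),$$
so that $H_q(\mathcal{H},\mathbb{Z}_p)\cong H_q(\mathcal{G},\Lambda(G))$ as $\Lambda(G)$-modules, the action on the left coming from the conjugation action of $\mathcal{G}$ on $\mathcal{H}$ (which factors through $G$) and on the right from the residual structure of $\Lambda(G)=\Lambda(\mathcal{G})/I_{\mathcal{H}}\Lambda(\mathcal{G})$.

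Since $\mathcal{G}$ is finitely presented with $cd(\mathcal{G})\leq 2$, I would take a minimal free resolution of $\mathbb{Z}_p$ over the local ring $\Lambda(\mathcal{G})$,
$$0\longrightarrow \Lambda(\mathcal{G})^{r_2}\longrightarrow\Lambda(\mathcal{G})^{r_1}\longrightarrow\Lambda(\mathcal{G})^{r_0}\longrightarrow\mathbb{Z}_p\longrightarrow 0,$$
which is finite with $r_0=1$ and $r_i=\dim_{\mathbb{F}_p}H_i(\mathcal{G},\mathbb{F}_p)$ by minimality. Applying $-\,\hat{\otimes}_{\Lambda(\mathcal{G})}\Lambda(G)$ yields the complex of finitely generated free $\Lambda(G)$-modules $0\to\Lambda(G)^{r_2}\to\Lambda(G)^{r_1}\to\Lambda(G)^{r_0}\to 0$ whose homology computes $H_q(\mathcal{G},\Lambda(G))=H_q(\mathcal{H},\mathbb{Z}_p)$. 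As $\Lambda(G)$ is Noetherian (here $G$ is $p$-adic analytic), its subquotients are finitely generated, which gives at once that $\mathcal{X}=H_1(\mathcal{H},\mathbb{Z}_p)$ and $H_2(\mathcal{H},\mathbb{Z}_p)$ are finitely generated over $\Lambda(G)$ and that $H_q$ vanishes for $q\geq 3$.

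For the rank formula I would pass to the skew field of fractions $Q(G)$, which exists because $G$ is $p$-adic analytic without $p$-torsion, so $\Lambda(G)$ is a Noetherian domain. Since $Q(G)$ is a flat Ore localization, $Q(G)\otimes_{\Lambda(G)}-$ is exact and commutes with homology; applying it to the complex above turns it into a bounded complex of finite-dimensional $Q(G)$-vector spaces whose Euler characteristic may be computed either on the terms or on the homology, giving
$$\sum_{q}(-1)^q\,\mathrm{rank}_{\Lambda(G)}H_q(\mathcal{H},\mathbb{Z}_p)=\sum_{i}(-1)^i r_i=\chi(\mathcal{G}).$$
Writing out the three surviving terms and using $H_0(\mathcal{H},\mathbb{Z}_p)=\mathbb{Z}_p$ produces $\mathrm{rank}_{\Lambda(G)}(\mathbb{Z}_p)-\mathrm{rank}_{\Lambda(G)}(\mathcal{X})+\mathrm{rank}_{\Lambda(G)}(H_2(\mathcal{H},\mathbb{Z}_p))=\chi(\mathcal{G})$. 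It then remains to identify $\mathrm{rank}_{\Lambda(G)}(\mathbb{Z}_p)$ with $\delta_{G,1}$: if $G$ is trivial then $\Lambda(G)=\mathbb{Z}_p$ and the rank is $1$, while if $G$ is non-trivial any $1\neq\gamma\in G$ gives a non-zero-divisor $\gamma-1\in I_G$ that becomes invertible in $Q(G)$, whence $Q(G)\otimes_{\Lambda(G)}\mathbb{Z}_p=0$ and the rank is $0$. Rearranging yields the claimed identity.

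The main obstacle is not the rank bookkeeping but the careful handling of the compact (completed) homological algebra: one must check that Shapiro's isomorphism is one of $\Lambda(G)$-modules with the intended structure, that $-\,\hat{\otimes}_{\Lambda(\mathcal{G})}\Lambda(G)$ applied to the finite free resolution genuinely computes $H_q(\mathcal{G},\Lambda(G))$ in the category of compact modules (where minimal free resolutions of pro-$p$ groups are available), and that $Q(G)$ is truly flat over $\Lambda(G)$ so that the Euler-characteristic argument is legitimate. These verifications are exactly where the hypotheses that $\mathcal{G}$ is finitely presented of cohomological dimension at most $2$ and that $G$ is $p$-adic analytic without $p$-torsion enter, via Lazard's structure theory and the Ore localization.
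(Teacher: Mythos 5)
Your proposal is correct, but note that the paper itself contains no proof of this statement: it is quoted from \cite[Proposition 1.1 and Theorem 1.4]{Nguyen3}, with the remark that Howson \cite{Howson} proves a more general version, so the comparison here is with those sources rather than with an internal argument. What you wrote is essentially a faithful reconstruction of the standard proof used there: the identification $H_q(\mathcal{H},\mathbb{Z}_p)\cong H_q(\mathcal{G},\Lambda(G))$ via $\mathbb{Z}_p\,\hat{\otimes}_{\Lambda(\mathcal{H})}\Lambda(\mathcal{G})\cong\Lambda(G)$ (this uses that $\Lambda(\mathcal{G})$ is free as a compact $\Lambda(\mathcal{H})$-module, cf.\ \cite{Brumer} and \cite[Ch.\ V]{Neukirch-Schmidt-Wingberg} --- you rightly flag this as the point needing care); the length-two minimal free resolution, where finite presentation gives $r_1=\dim_{\mathbb{F}_p}H_1(\mathcal{G},\mathbb{F}_p)$ and $r_2=\dim_{\mathbb{F}_p}H_2(\mathcal{G},\mathbb{F}_p)$ finite, $cd(\mathcal{G})\leq 2$ makes the second syzygy projective, and topological Nakayama over the local ring $\Lambda(\mathcal{G})$ makes it free of rank $r_2$; base change to the Noetherian ring $\Lambda(G)$ (Lazard) for finite generation; and the flat Ore localization $Q(G)$ (\cite{Neumann}, \cite{Goodearl-Warfield}) for the Euler-characteristic count, including the clean verification that $\mathrm{rank}_{\Lambda(G)}\mathbb{Z}_p=\delta_{G,1}$. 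One discrepancy worth making explicit: the statement as printed defines $\chi(\mathcal{G})=\sum_{i\geq 0}\dim_{\mathbb{F}_p}H_i(G,\mathbb{F}_p)$, but both the missing alternating sign and the appearance of $G$ in place of $\mathcal{G}$ are typos in the paper; your computation correctly uses $\chi(\mathcal{G})=\sum_{i}(-1)^i\dim_{\mathbb{F}_p}H_i(\mathcal{G},\mathbb{F}_p)=1-r_1+r_2$, and without that correction the formula would fail already for $\mathcal{G}$ free with $G=\mathbb{Z}_p$, where your argument recovers the expected rank $r_2$ for $\mathcal{X}=\mathfrak{X}_{S_p}(K_\infty)$. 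So your route is not genuinely different from the cited one; it simply supplies, correctly, the proof the paper delegates to the literature.
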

\begin{rem}
A more general result can be found in the paper \cite[theorem
1.1]{Howson} of Howson.
\end{rem}
\subsection{On the torsion of some Iwasawa modules}
Let $\Sigma$ be a finite set of places of $K$ containing $S_p$. Let
$K_{\Sigma}$ be the maximal $\Sigma$-ramified pro-$p$-extension of
$K$. Let denote
$\mathfrak{X}_{\Sigma}(K):=\mathrm{Gal}(K_{\Sigma}/K)^{ab}$.\\
We denote $\Gamma$ the Galois group of the cyclotomic
$\mathbb{Z}_p$-extension of $K$.\\
By definition we have the following exact sequence :
\begin{center}
\[
  \xymatrix @C=2pc{0
  \ar@[>][r]&Tor_{\Lambda(\Gamma)}\mathfrak{X}_{\Sigma}(K_{\infty})\ar@[>][r]&\mathfrak{X}_{\Sigma}(K_{\infty})
  \ar@[>][r]&fr_{\Lambda(\Gamma)}\mathfrak{X}_{\Sigma}(K_{\infty})\ar@[>][r]&0}
\]
\end{center}
\begin{theo}(\cite[section 2]{Iwasawa})
Suppose that $K_\infty$ satisfy the weak Leopoldt conjecture then we
have :
\begin{enumerate}
    \item $rang_{\Lambda(\Gamma)}\mathfrak{X}_{\Sigma}(K_{\infty})= r_2$
    \item $\mathfrak{X}_{\Sigma}(K_{\infty})$ has no non trivial finite
    sub-module.
\end{enumerate}
\end{theo}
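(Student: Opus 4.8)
The plan is to realize $\mathfrak{X}_{\Sigma}(K_{\infty})$ as an abelianized kernel $\mathcal{H}^{ab}$ and then run the two machines already assembled above: the rank formula of Theorem \ref{theo1} for part (1), and the homology exact sequence coming from the Hochschild--Serre spectral sequence for part (2). Concretely, I would set $\mathcal{G}=G_{\Sigma}(K)$, $\mathcal{H}=\mathrm{Gal}(K_{\Sigma}/K_{\infty})$ and $G=\Gamma=\mathrm{Gal}(K_{\infty}/K)\cong\mathbb{Z}_p$. First I would check that the maximal $\Sigma$-ramified pro-$p$-extension of $K_{\infty}$ coincides with $K_{\Sigma}$: indeed $K_{\infty}/K$ is pro-$p$ and unramified outside $p\in\Sigma$, so $K_{\infty}\subseteq K_{\Sigma}$ and $K_{\Sigma}/K_{\infty}$ is $\Sigma$-ramified pro-$p$; conversely that maximal extension is pro-$p$ and Galois over $K$, hence contained in $K_{\Sigma}$. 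Thus $\mathcal{H}^{ab}=\mathfrak{X}_{\Sigma}(K_{\infty})$, with $\mathcal{G}$ finitely presented of cohomological dimension $\le 2$ (as $\Sigma\supseteq S_p$, $p$ odd) and $\Gamma$ $p$-adic analytic without $p$-torsion, $cd(\Gamma)=1$. The key remark is that the weak Leopoldt conjecture is exactly $H^2(\mathcal{H},\mathbb{Q}_p/\mathbb{Z}_p)=0$, and by the Pontryagin duality $H^2(\mathcal{H},\mathbb{Q}_p/\mathbb{Z}_p)\cong H_2(\mathcal{H},\mathbb{Z}_p)^{\vee}$ this is equivalent to $H_2(\mathcal{H},\mathbb{Z}_p)=0$; this single vanishing feeds both parts.

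For (1) I would apply Theorem \ref{theo1} to $(\mathcal{G},\mathcal{H},\Gamma)$. Since $\Gamma\neq 1$ we have $\delta_{\Gamma,1}=0$, and the weak Leopoldt vanishing gives $\mathrm{rank}_{\Lambda(\Gamma)}H_2(\mathcal{H},\mathbb{Z}_p)=0$, so the formula collapses to $\mathrm{rank}_{\Lambda(\Gamma)}\mathfrak{X}_{\Sigma}(K_{\infty})=-\chi(\mathcal{G})$. It then remains to compute $\chi(G_{\Sigma}(K))=1-d+r$, where $d$ and $r$ are the generator and relation ranks of $G_{\Sigma}(K)$. Here I would invoke the well-known global Euler--Poincaré characteristic formula, which for $p$ odd and $\Sigma\supseteq S_p\cup S_{\infty}$ gives $d-r=r_2+1$ independently of the finite primes in $\Sigma$ (the right-hand side of Tate's formula depends only on the archimedean data). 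Hence $\chi(G_{\Sigma}(K))=-r_2$ and $\mathrm{rank}_{\Lambda(\Gamma)}\mathfrak{X}_{\Sigma}(K_{\infty})=r_2$.

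For (2) I would run the homology exact sequence of the Lemma above with $G=\Gamma$; this sequence is available precisely because weak Leopoldt provides its hypothesis $H^2(\mathcal{H},\mathbb{Q}_p/\mathbb{Z}_p)=0$. Since $cd(\Gamma)=1$ forces $H_3(\Gamma,\mathbb{Z}_p)=H_2(\Gamma,\mathbb{Z}_p)=0$, the sequence yields an isomorphism
$$\mathfrak{X}_{\Sigma}(K_{\infty})^{\Gamma}=H_1(\Gamma,\mathfrak{X}_{\Sigma}(K_{\infty}))\cong H_2(G_{\Sigma}(K),\mathbb{Z}_p).$$
I would then use the standard criterion that a finitely generated $\Lambda(\Gamma)$-module $X$ has no nonzero finite submodule if and only if $X^{\Gamma}=X[T]$ (with $T=\gamma-1$) is $\mathbb{Z}_p$-torsion-free. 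One direction is immediate, as the $\mathbb{Z}_p$-torsion of $X^{\Gamma}$ is a finite $T$-stable submodule; for the other, a nonzero finite submodule $M$ is killed by a power of $\mathfrak{m}=(p,T)$, so $T$ acts nilpotently on $M$ and $M^{\Gamma}\neq 0$ lands in $(X^{\Gamma})_{\mathrm{tors}}$. Thus (2) reduces to the torsion-freeness of $H_2(G_{\Sigma}(K),\mathbb{Z}_p)$.

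I expect that last reduction to be the main obstacle: proving $H_2(G_{\Sigma}(K),\mathbb{Z}_p)$ is $\mathbb{Z}_p$-free, equivalently that $H^2(G_{\Sigma}(K),\mathbb{Q}_p/\mathbb{Z}_p)$ is divisible. I would attack this via Poitou--Tate global duality, identifying the possible non-divisible part with a Shafarevich--Tate group attached to $\mathbb{Z}_p(1)$ and using $p>2$ (and, where needed, $\mu_p\not\subseteq K$) to control the archimedean and local-at-$p$ contributions and force its vanishing. As a fallback, should the duality computation prove delicate, I would keep in reserve the more classical route to (2) exploiting that the primes above $p$ are totally ramified in $K_{\infty}/K$ from some layer on, so that any finite submodule would descend to a finite everywhere-unramified subextension and hence be trivial.
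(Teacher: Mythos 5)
The paper contains no proof of this theorem: it is imported wholesale from \cite[section 2]{Iwasawa}, so there is no internal argument to compare against, and your attempt is best judged as a rederivation from the paper's own toolkit. Judged that way, it is essentially correct and arguably more in the spirit of the paper than the citation is. The identification $K_{\Sigma}=$ maximal $\Sigma$-ramified pro-$p$-extension of $K_{\infty}$, hence $\mathcal{H}^{ab}=\mathfrak{X}_{\Sigma}(K_{\infty})$, is argued correctly; the translation of weak Leopoldt into $H^2(\mathcal{H},\mathbb{Q}_p/\mathbb{Z}_p)=0$, dually $H_2(\mathcal{H},\mathbb{Z}_p)=0$, is the standard one (\cite[10.3.22/10.3.25]{Neukirch-Schmidt-Wingberg}); for (1) the rank formula of \cite[proposition 1.1 and theorem 1.4]{Nguyen3} with $\delta_{\Gamma,1}=0$ and Tate's global Euler--Poincar\'e formula $\chi(G_{\Sigma}(K))=1-d+r=-r_2$ (valid for $p$ odd, $\Sigma\supseteq S_p$, the tame primes contributing nothing) gives exactly $r_2$; for (2) the seven-term homology sequence of the paper's Lemma, with $H_3(\Gamma,\mathbb{Z}_p)=H_2(\Gamma,\mathbb{Z}_p)=0$ since $cd(\Gamma)=1$, yields $\mathfrak{X}_{\Sigma}(K_{\infty})^{\Gamma}\cong H_2(G_{\Sigma}(K),\mathbb{Z}_p)$, and your finite-submodule criterion is proved correctly ($X[T]$ is finitely generated over $\Lambda(\Gamma)/(T)\cong\mathbb{Z}_p$, so its torsion is a finite $T$-killed submodule; conversely a nonzero finite submodule is $\mathfrak{m}$-nilpotent, so $M[T]\neq 0$). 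What this route buys over Iwasawa's original treatment is uniformity: both assertions fall out of the single vanishing $H_2(\mathcal{H},\mathbb{Z}_p)=0$, using only machinery the paper has already assembled for Theorem \ref{theo3}.

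The one substantive criticism is that the step you flag as ``the main obstacle'' is not an obstacle, and deferring it to a projected Poitou--Tate computation is the only genuine gap in your write-up. Since $cd(G_{\Sigma}(K))\leq 2$ (which you already invoked for the rank formula), $H_3(G_{\Sigma}(K),\mathbb{F}_p)=0$, so the coefficient sequence for $0\to\mathbb{Z}_p\to\mathbb{Z}_p\to\mathbb{F}_p\to 0$ makes multiplication by $p$ injective on $H_2(G_{\Sigma}(K),\mathbb{Z}_p)$; equivalently $H^2(G_{\Sigma}(K),\mathbb{Q}_p/\mathbb{Z}_p)$ is $p$-divisible, and finite generation follows from finite presentation of $G_{\Sigma}(K)$ plus Nakayama, so $H_2(G_{\Sigma}(K),\mathbb{Z}_p)$ is $\mathbb{Z}_p$-free. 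This is verbatim the opening move of the paper's proof of Theorem \ref{theo}, so no duality argument is needed. Be warned also that your fallback sketch (descent using total ramification of $p$ in $K_{\infty}/K$) is the classical argument for the \emph{unramified} Iwasawa module $X=\mathrm{Gal}(L_{\infty}/K_{\infty})$ and does not transfer verbatim to $\mathfrak{X}_{\Sigma}(K_{\infty})$; it should be discarded in favor of the one-line $cd\leq 2$ argument above.
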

We are going to study the relation between
$\mathrm{Tor}_{\Lambda(\Gamma)}(\mathfrak{X}_{\Sigma}(K_{\infty}))$
which is the projective limit of
$\mathrm{Tor}_{\mathbb{Z}_p}(\mathfrak{X}_{\Sigma}(K_{n}))$.\\
The next theorem is necessary to prove some results in the
following.
\begin{theo} (\cite[Proposition 3.1]{Nguyen1} or \cite[Proposition 3.1]{Nguyen2} )
\label{theo1} Let $K_\infty/K$ be the cyclotomic
$\mathbb{Z}_p$-extension of $K$. Suppose that the Leopoldt
conjecture holds for the fields $K_n$ and the prime $p$ for any
$n\geq 0$. Then
$$\mathrm{Tor}_{\Lambda(\Gamma)}(\mathfrak{X}_{\Sigma}(K_{\infty}))\cong \varprojlim_n \mathrm{Tor}_{\mathbb{Z}_p}\mathfrak{X}_{\Sigma}(K_n).$$
\end{theo}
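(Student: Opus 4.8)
The plan is to realize both sides inside the finitely generated $\Lambda(\Gamma)$-module $\mathfrak{X}:=\mathfrak{X}_{\Sigma}(K_{\infty})$ and to compare them after descent to the finite layers. Fix a topological generator $\gamma$ of $\Gamma$, write $\Lambda=\Lambda(\Gamma)\cong\mathbb{Z}_p[[T]]$ with $T=\gamma-1$, and set $\Gamma_n=\Gamma^{p^n}$, $\omega_n=\gamma^{p^n}-1$, and $m=(p,T)$. Put $Y:=\mathrm{Tor}_{\Lambda}\mathfrak{X}$ and $F:=\mathfrak{X}/Y$. By the preceding theorem of Iwasawa, $F$ is torsion-free of $\Lambda$-rank $r_2$ and $\mathfrak{X}$ has no nonzero finite $\Lambda$-submodule; hence so does its submodule $Y$. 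The Leopoldt hypothesis for every $K_n$ enters through the standard $\Sigma$-ramified descent isomorphism $\mathfrak{X}_n:=\mathfrak{X}_{\Sigma}(K_n)\cong\mathfrak{X}_{\Gamma_n}=\mathfrak{X}/\omega_n\mathfrak{X}$, compatible with the norm maps, so that $\mathfrak{X}=\varprojlim_n\mathfrak{X}_n$ with transition maps the natural projections $\mathfrak{X}/\omega_{n+1}\mathfrak{X}\to\mathfrak{X}/\omega_n\mathfrak{X}$.

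First I would extract the torsion one layer at a time. Since $F$ is torsion-free and finitely generated over the two-dimensional regular local ring $\Lambda$, its bidual is free (a reflexive module here is maximal Cohen--Macaulay, hence free), so there is an exact sequence $0\to F\to\Lambda^{r_2}\to C\to0$ with $C$ finite. Tensoring $0\to Y\to\mathfrak{X}\to F\to0$ with $\Lambda/\omega_n$ and using $\mathrm{Tor}_1^{\Lambda}(F,\Lambda/\omega_n)=F[\omega_n]=0$ gives, via the descent isomorphism, an exact sequence $0\to Y_{\Gamma_n}\to\mathfrak{X}_n\to F_{\Gamma_n}\to0$. Tensoring $0\to F\to\Lambda^{r_2}\to C\to0$ with $\Lambda/\omega_n$ and using $\mathrm{Tor}_1^{\Lambda}(C,\Lambda/\omega_n)=C[\omega_n]$ shows that $\mathrm{Tor}_{\mathbb{Z}_p}F_{\Gamma_n}=C[\omega_n]$, because the image of $F_{\Gamma_n}$ in $(\Lambda/\omega_n)^{r_2}\cong\mathbb{Z}_p[\Gamma/\Gamma_n]^{r_2}$ is $\mathbb{Z}_p$-free. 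As $Y$ is torsion, $Y_{\Gamma_n}$ is finite for $n\gg0$, so applying the left-exact functor $\mathrm{Tor}_{\mathbb{Z}_p}$ yields, with $T_n:=\mathrm{Tor}_{\mathbb{Z}_p}\mathfrak{X}_n$, the exact sequence $0\to Y_{\Gamma_n}\to T_n\to C[\omega_n]$.

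Next I would pass to the inverse limit over $n$. The decisive computation is that the correction term disappears: $C$ being finite is killed by some $m^k$, and since $\omega_n\in m^{n+1}$ we get $C[\omega_n]=C$ for $n\gg0$, while the transition map $C[\omega_{n+1}]\to C[\omega_n]$ is multiplication by $\nu_{n+1,n}=\omega_{n+1}/\omega_n=\sum_{i=0}^{p-1}\gamma^{ip^n}$, which is congruent to $p$ modulo $m^{n+1}$ and hence acts as $p$ on $C$. Therefore $\varprojlim_n C[\omega_n]=\bigcap_n p^nC=0$, and left-exactness of $\varprojlim$ gives $\varprojlim_n T_n\cong\varprojlim_n Y_{\Gamma_n}=\varprojlim_n Y/\omega_nY$. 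Finally I identify this limit with $Y$ itself. The canonical map $Y\to\varprojlim_n Y/\omega_nY$ is injective since $\bigcap_n\omega_nY\subseteq\bigcap_n m^{n+1}Y=0$ by Krull's theorem; for surjectivity one embeds $Y$ into an elementary module $E$ with finite cokernel $C'$ (possible precisely because $Y$ has no nonzero finite submodule), repeats the two-sequence argument above to see that the $\mathrm{Tor}$-correction $C'[\omega_n]$ dies in the limit while $C'_{\Gamma_n}$ recovers the quotient, and concludes $\varprojlim_n Y/\omega_nY\cong\varprojlim_n E/\omega_nE\cap(\cdots)\cong Y$. Combining the isomorphisms gives $\varprojlim_n\mathrm{Tor}_{\mathbb{Z}_p}\mathfrak{X}_{\Sigma}(K_n)\cong\mathrm{Tor}_{\Lambda(\Gamma)}\mathfrak{X}_{\Sigma}(K_{\infty})$.

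The main obstacle will be the control of transition maps in the two limit computations: one must verify that, after the descent isomorphism, the norm maps on the layers induce multiplication by $\nu_{n+1,n}$ on the finite cokernels and the natural projections on the $Y/\omega_nY$, and then that the resulting $\omega_n$-adic completion of $Y$ reproduces $Y$. This last point is exactly where the absence of a nonzero finite submodule is indispensable, and hence where the full strength of the Leopoldt hypothesis (through Iwasawa's theorem) is used; for a module carrying a finite submodule the limit would acquire spurious contributions and the isomorphism would fail.
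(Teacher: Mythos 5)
Your route is essentially the paper's own: the same decomposition $0\to Y\to\mathfrak{X}\to F\to 0$ with $Y=\mathrm{Tor}_{\Lambda}\mathfrak{X}$, the same resolution $0\to F\to\Lambda^{r_2}\to C\to 0$ with $C$ finite, the same two Tor-sequences modulo $\omega_n$, and the same limit computation in which the finite correction term (your $C[\omega_n]$, the paper's $F^{\Gamma_n}$) dies because the transition maps $\nu_{n+1,n}$ act topologically nilpotently. But there is a genuine gap at the one point where the hypothesis of the theorem must do its work: the claim that ``as $Y$ is torsion, $Y_{\Gamma_n}$ is finite for $n\gg0$'' is false. Take $Y=\Lambda/(\omega_1/\omega_0)\cong\mathbb{Z}_p[\zeta_p]$: it is $\Lambda$-torsion with no nonzero finite submodule, yet $\omega_n$ annihilates it for every $n\geq1$, so $Y_{\Gamma_n}=Y$ is infinite for all $n$. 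In general $Y_{\Gamma_n}$ is finite if and only if the characteristic ideal of $Y$ is prime to $\omega_n$, and --- this is precisely the content of the paper's appeal to Iwasawa (\S 10) and of its rank count ``we recall that the fields $K_n$ satisfy the Leopoldt conjecture'' --- coprimality to all the $\omega_n$ is equivalent to Leopoldt holding at every layer; indeed the Leopoldt defect at level $n$ equals $\mathrm{rank}_{\mathbb{Z}_p}Y_{\Gamma_n}$. If some $Y_{\Gamma_n}$ had positive $\mathbb{Z}_p$-rank, it would not inject into $T_n=\mathrm{Tor}_{\mathbb{Z}_p}\mathfrak{X}_n$, your sequence $0\to Y_{\Gamma_n}\to T_n\to C[\omega_n]$ would break, and the conclusion itself fails (the limit of the $T_n$ would miss the free part of the $Y_{\Gamma_n}$); so no argument that never correctly invokes Leopoldt can close this step.

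Compounding this, you locate the hypothesis in two places where it does nothing. The ``descent isomorphism'' $\mathfrak{X}_n\cong\mathfrak{X}_{\Gamma_n}$ is not an isomorphism and uses no Leopoldt input: unconditionally there is an exact sequence $0\to\mathfrak{X}_{\Gamma_n}\to\mathfrak{X}_n\to\Gamma_n\to0$ with $\Gamma_n\cong\mathbb{Z}_p$ (the paper's $\Gamma_n\cong\mathfrak{X}_{\Sigma}(K_n)/(\mathfrak{X}_{\Sigma}(K_{\infty}))_{\Gamma_n}$); since $\Gamma_n$ is $\mathbb{Z}_p$-free this discrepancy is harmless after applying $\mathrm{Tor}_{\mathbb{Z}_p}$, but Leopoldt is not what produces it. Likewise your closing claim that ``the full strength of the Leopoldt hypothesis'' enters through the absence of finite submodules is a misattribution: that property follows from the weak Leopoldt conjecture, which holds unconditionally for the cyclotomic $\mathbb{Z}_p$-extension (the Iwasawa theorem quoted just before the statement). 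Finally, your elementary-module detour for the surjectivity of $Y\to\varprojlim_n Y/\omega_nY$ is unnecessary: $Y$ is compact and the $\omega_nY$ form a decreasing chain of closed submodules (as $\omega_{n+1}=\nu_{n+1,n}\,\omega_n$), so $\varprojlim_n Y/\omega_nY=Y/\bigcap_n\omega_nY=Y$ directly, with $\bigcap_n\omega_nY=0$ by your Krull argument. With the false finiteness claim replaced by the Leopoldt-based one, your argument coincides with the paper's proof.
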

\noindent{\textbf{Proof}.}
 Using the structure of
$\Lambda(\Gamma)$-modules \cite[theorem
5.1.10]{Neukirch-Schmidt-Wingberg}, we have the following exact
sequences :
\begin{center}
\[
  \xymatrix @C=2pc{0
  \ar@[>][r]&\mathrm{Tor}_{\Lambda(\Gamma)}\mathfrak{X}_{\Sigma}(K_{\infty})
  \ar@[>][r]&\mathfrak{X}_{\Sigma}(K_{\infty})\ar@[>][r]&\mathrm{fr}_{\Lambda(\Gamma)}\mathfrak{X}_{\Sigma}(K_{\infty})\ar@[>][r]&0}
\]
\end{center} and
\begin{center}
\[
  \xymatrix @C=2pc{0
  \ar@[>][r]&\mathrm{fr}_{\Lambda(\Gamma)}(\mathfrak{X}_{\Sigma}(K_{\infty}))\ar@[>][r]&\Lambda(\Gamma)^{r_2}\ar@[>][r]&F\ar@[>][r]&0}
\]
\end{center}
where $F$ is a finite $\Lambda(\Gamma)$-module.\\
By Snake lemma we obtain the following:
\begin{center}
\[
  \xymatrix @C=2pc{0
  \ar@[>][r]&(\mathrm{Tor}_{\Lambda(\Gamma)}\mathfrak{X}_{\Sigma}(K_{\infty}))_{\Gamma_n}\ar@[>][r]&(\mathfrak{X}_{\Sigma}(K_{\infty}))_{\Gamma_n}
  \ar@[>][r]&(\mathrm{fr}_{\Lambda(\Gamma)}\mathfrak{X}_{\Sigma}(K_{\infty}))_{\Gamma_n}\ar@[>][r]&0},
\]
\end{center} and
\begin{center}
\[
  \xymatrix @C=2pc{0
  \ar@[>][r]&F^{\Gamma_n}\ar@[>][r]&(\mathrm{fr}_{\Lambda(\Gamma)}\mathfrak{X}_{\Sigma}(K_{\infty}))_{\Gamma_n}
  \ar@[>][r]&(\Lambda(\Gamma)^{r_2})_{\Gamma_n}\ar@[>][r]&F_{\Gamma_n}\ar@[>][r]&0},
\]
\end{center}
it is now clear that Leopoldt's conjecture holds for all $K_n$,
$n\geq 0$  if and only if divisor of
$\mathfrak{X}_{\Sigma}(K_{\infty})$ is disjoint from all
$\omega_n=\gamma^{p^n}-1$, $n\geq 0$, where $\gamma$ is topological
generator of $\Gamma$(\cite[\S10]{Iwasawa}).\\

A simple calculation of $\mathbb{Z}_p$-rank (we recall that the
fields $K_n$ satisfy the Leopoltd conjecture) and we have that
$$\Gamma_n\cong
\mathfrak{X}_{\Sigma}(K_n)/(\mathfrak{X}_{\Sigma}(K_{\infty}))_{\Gamma_n},$$
which gives the exact sequence
\begin{center}
\[
  \xymatrix @C=2pc{0
  \ar@[>][r]&(\mathrm{Tor}_{\Lambda(\Gamma)}\mathfrak{X}_{\Sigma}(K_{\infty}))_{\Gamma_n}
  \ar@[>][r]&\mathrm{Tor}_{\mathbb{Z}_p}((\mathfrak{X}_{\Sigma}(K_{\infty}))_{\Gamma_n})\ar@[>][r]&F^{\Gamma_n}\ar@[>][r]&0}
\]
\end{center} furthermore since $\Gamma_n\cong
\mathfrak{X}_{\Sigma}(K_n)/(\mathfrak{X}_{\Sigma}(K_{\infty}))_{\Gamma_n}$
we have that
$\mathrm{Tor}_{\mathbb{Z}_p}((\mathfrak{X}_{\Sigma}(K_{\infty}))_{\Gamma_n})\cong
\mathrm{Tor}_{\mathbb{Z}_p}((\mathfrak{X}_{\Sigma}(K_n))$. Passing
to the projective limit gives the required result. \hfill
$\square$\vskip 7pt The next theorem is necessary to prove some
results in the following.
\begin{theo}\cite[Theorem
11.3.5]{Neukirch-Schmidt-Wingberg} \label{theo2} Assume that the
weak leopoldt conjecture holds for the $\mathbb{Z}_p$-extension
$K_{\infty}/K$ and let $\Sigma \supset S_p$ be finite. Then there
exists a canonical exact sequence of $\Lambda(\Gamma)$-modules.
\begin{center}
\[
  \xymatrix @C=2pc{0 \ar@[>][r]&\bigoplus_{v\in \Sigma\setminus S_p}
  \mathrm{Ind}^{\Gamma_{v}}_{\Gamma}(\mathrm{T}(K_{v}(p)/K_{v})_{G_{(K_{\infty})_{v}}})\ar@[>][r]&
  \mathfrak{X}_{\Sigma}(K_{\infty})\ar@[>][r]&\mathfrak{X}_{S_p}(K_{\infty})\ar@[>][r]&0}.
\]
\end{center}
In particular, there is an exact sequence of
$\Lambda(\Gamma)$-torsion modules
\begin{center}
\[
  \xymatrix @C=1.5pc{0 \ar@[>][r]&\bigoplus_{v\in \Sigma\setminus S_p}
  \mathrm{Ind}^{\Gamma_{v}}_{\Gamma}(\mathrm{T}(K_{v}(p)/K_{v})_{G_{(K_{\infty})_{v}}})\ar@[>][r]&
  \mathrm{Tor}_{\Lambda(\Gamma)}(\mathfrak{X}_{\Sigma}(K_{\infty}))\ar@[>][r]&\mathrm{Tor}_{\Lambda(\Gamma)}(\mathfrak{X}_{S_p}(K_{\infty}))\ar@[>][r]&0},
\]
\end{center}
where $K_{v}(p)$ is the maximal pro-$p$-extension of $K_v$,
$\mathrm{T}(K_{v}(p)/K_{v})$ is the inertia subgroup, $\Gamma_v$ is
the decomposition subgroup of $\Gamma$ at $v$ and
$G_{(K_{\infty})_{v}}$ is the absolute Galois group of
$(K_{\infty})_{v}$.
\end{theo}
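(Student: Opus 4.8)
The plan is to produce the first exact sequence by Pontryagin-dualizing a five-term Hochschild--Serre sequence, the weak Leopoldt hypothesis entering precisely to kill a transgression map, and then to deduce the torsion sequence by a purely formal argument. Write $G_\Sigma := \mathrm{Gal}((K_\infty)_\Sigma/K_\infty)$, so that $\mathfrak{X}_\Sigma(K_\infty) = G_\Sigma^{ab}$ and $H^1(G_\Sigma, \mathbb{Q}_p/\mathbb{Z}_p) = \mathrm{Hom}(\mathfrak{X}_\Sigma(K_\infty), \mathbb{Q}_p/\mathbb{Z}_p) = \mathfrak{X}_\Sigma(K_\infty)^\vee$; the whole construction is $\Gamma$-equivariant, so every term is a $\Lambda(\Gamma)$-module. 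The one arithmetic input is the standard reformulation of the weak Leopoldt conjecture for $K_\infty/K$ as the vanishing $H^2(G_{S_p}, \mathbb{Q}_p/\mathbb{Z}_p) = 0$.

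First I would exploit that $(K_\infty)_{S_p}$ is the maximal subextension of $(K_\infty)_\Sigma$ unramified outside $S_p$: putting $N := \mathrm{Gal}((K_\infty)_\Sigma/(K_\infty)_{S_p})$, the group $N$ is the closed normal subgroup of $G_\Sigma$ generated by the inertia groups at the places above $\Sigma \setminus S_p$, and $G_\Sigma/N = G_{S_p}$. The five-term sequence attached to $1 \to N \to G_\Sigma \to G_{S_p} \to 1$ with coefficients $\mathbb{Q}_p/\mathbb{Z}_p$ reads
\[ 0 \to H^1(G_{S_p}, \mathbb{Q}_p/\mathbb{Z}_p) \to H^1(G_\Sigma, \mathbb{Q}_p/\mathbb{Z}_p) \to H^1(N, \mathbb{Q}_p/\mathbb{Z}_p)^{G_{S_p}} \xrightarrow{\mathrm{tg}} H^2(G_{S_p}, \mathbb{Q}_p/\mathbb{Z}_p). \]
Invoking weak Leopoldt, the last term vanishes, so the transgression $\mathrm{tg}$ is zero and the sequence truncates to a short exact sequence.

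Next I would Pontryagin-dualize this short exact sequence. Duality converts $H^1(N, \mathbb{Q}_p/\mathbb{Z}_p)^{G_{S_p}}$ into the $G_{S_p}$-coinvariants $(N^{ab})_{G_{S_p}}$ and reverses the arrows, giving
\[ 0 \to (N^{ab})_{G_{S_p}} \to \mathfrak{X}_\Sigma(K_\infty) \to \mathfrak{X}_{S_p}(K_\infty) \to 0. \]
The task then reduces to identifying $(N^{ab})_{G_{S_p}}$. Since $N$ is generated by the inertia at the places above $\Sigma \setminus S_p$ and each such $v$ is prime to $p$, class field theory shows that $(N^{ab})_{G_{S_p}}$ is the direct sum, over $v \in \Sigma \setminus S_p$ and over the places $\tilde w \mid v$ of $K_\infty$, of the local tame inertia $\mathrm{T}(K_v(p)/K_v)$ taken coinvariant under the local decomposition group $G_{(K_\infty)_v}$. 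Finally, $\Gamma$ permutes the places $\tilde w \mid v$ of $K_\infty$ transitively with stabiliser $\Gamma_v$, so Shapiro's lemma repackages the $v$-block as $\mathrm{Ind}^{\Gamma_v}_\Gamma(\mathrm{T}(K_v(p)/K_v)_{G_{(K_\infty)_v}})$, yielding the stated first sequence. I expect this local identification --- matching the dual of a Frobenius-fixed ramified cohomology with the induced coinvariant inertia module --- to be the main obstacle; everything else is formal.

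It remains to deduce the torsion sequence, which is now purely formal. Since $\Gamma \cong \mathbb{Z}_p$ the algebra $\Lambda(\Gamma)$ is a domain; and since each $v \in \Sigma \setminus S_p$ is prime to $p$, the module $\mathrm{T}(K_v(p)/K_v)_{G_{(K_\infty)_v}}$ is finite, so the first term $A := \bigoplus_v \mathrm{Ind}^{\Gamma_v}_\Gamma(\mathrm{T}(K_v(p)/K_v)_{G_{(K_\infty)_v}})$ is $\Lambda(\Gamma)$-torsion. As $A$ is torsion and embeds in $\mathfrak{X}_\Sigma(K_\infty)$, it lies in $\mathrm{Tor}_{\Lambda(\Gamma)}\mathfrak{X}_\Sigma(K_\infty)$, and it is exactly the kernel of $\mathrm{Tor}_{\Lambda(\Gamma)}\mathfrak{X}_\Sigma(K_\infty) \to \mathrm{Tor}_{\Lambda(\Gamma)}\mathfrak{X}_{S_p}(K_\infty)$. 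For surjectivity, given a torsion class $c$ of $\mathfrak{X}_{S_p}(K_\infty)$, I would lift it to $x \in \mathfrak{X}_\Sigma(K_\infty)$; a regular $\mu$ with $\mu c = 0$ forces $\mu x \in A$, so $x$ is annihilated by a regular element and is itself torsion. This yields the short exact sequence of $\Lambda(\Gamma)$-torsion modules, in agreement with the common $\Lambda(\Gamma)$-rank $r_2$ of $\mathfrak{X}_\Sigma(K_\infty)$ and $\mathfrak{X}_{S_p}(K_\infty)$ recorded above.
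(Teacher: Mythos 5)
You should first note that the paper does not prove this statement at all: it is quoted verbatim from \cite[Theorem 11.3.5]{Neukirch-Schmidt-Wingberg}, so your attempt has to be measured against the standard argument there. Your frame is the right one, and two of your three steps are sound: weak Leopoldt for $K_\infty/K$ is indeed equivalent to $H^2(\mathrm{Gal}((K_\infty)_{S_p}/K_\infty),\mathbb{Q}_p/\mathbb{Z}_p)=0$; the five-term Hochschild--Serre sequence for $1\to N\to G_\Sigma\to G_{S_p}\to 1$ plus Pontryagin duality correctly yields $0\to (N^{ab})_{G_{S_p}}\to \mathfrak{X}_{\Sigma}(K_\infty)\to \mathfrak{X}_{S_p}(K_\infty)\to 0$; and your purely formal deduction of the torsion sequence from the first sequence is complete and correct (if the submodule $A$ is torsion it is the whole kernel on $\mathrm{Tor}$, and your lifting argument gives surjectivity). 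The genuine gap is the middle step, which you yourself flag as ``the main obstacle'' and then dispatch with ``class field theory shows'': the isomorphism $(N^{ab})_{G_{S_p}}\cong \bigoplus_{v}\mathrm{Ind}^{\Gamma_v}_{\Gamma}\bigl(\mathrm{T}(K_v(p)/K_v)_{G_{(K_\infty)_v}}\bigr)$ \emph{is} the content of the theorem, not a routine local identification. Surjectivity of the natural map from the direct sum is formal, since $N$ is normally generated by the inertia subgroups at the places above $\Sigma\setminus S_p$; injectivity is not: at every finite layer $K_n$ the analogous map has a kernel coming from the closure of the global units of $K_n$ embedded diagonally in the local unit groups at $\Sigma\setminus S_p$, and that kernel is nonzero in general. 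The direct-sum decomposition appears only in the limit, where the global-unit contribution dies by a second, genuinely arithmetic application of the weak Leopoldt conjecture (via the Poitou--Tate machinery over $K_\infty$ in the proof of \cite[Theorem 11.3.5]{Neukirch-Schmidt-Wingberg}, or by Iwasawa's original limit argument). As written, your proof establishes only a surjection of the induced modules onto the kernel, which is strictly weaker than the statement.

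There is also a factual error in your torsion paragraph: the claim that $\mathrm{T}(K_v(p)/K_v)_{G_{(K_\infty)_v}}$ is finite ``since each $v\in\Sigma\setminus S_p$ is prime to $p$'' is false in exactly the case of interest. When $N(v)\equiv 1\pmod{p}$, i.e.\ $\mu_p\subset K_v$, the local cyclotomic $\mathbb{Z}_p$-extension $(K_\infty)_v$ is unramified and contains $\mu_{p^\infty}$, so $G_{(K_\infty)_v}$ acts trivially on the tame inertia $\mathrm{T}(K_v(p)/K_v)\cong\mathbb{Z}_p(1)$ and the coinvariants are all of $\mathbb{Z}_p(1)$, of $\mathbb{Z}_p$-rank one; this is precisely what the paper uses in the proof of Theorem \ref{theo3}, where $\mathcal{W}$ is a direct sum of $s$ copies of $\mathbb{Z}_p(1)$ and $\mathfrak{X}_S(K_\infty)\cong\mathbb{Z}_p$ is infinite. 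Your conclusion that the first term is $\Lambda(\Gamma)$-torsion survives, but for a different reason: every finite place is finitely decomposed in $K_\infty/K$, so $\Gamma_v$ is open in $\Gamma$ and $\mathrm{Ind}^{\Gamma_v}_{\Gamma}(\mathbb{Z}_p(1))$ is finitely generated as a $\mathbb{Z}_p$-module, hence $\Lambda(\Gamma)$-torsion. You should repair the justification accordingly, and then either carry out or correctly cite the injectivity step, which is the only non-formal point of the proof.
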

\subsection{\textbf{$p$-rational fields}}
Number fields $K$ such that
$H^{2}(G_{S_p}(K),\mathbb{Z}/p\mathbb{Z})=0$  are called
$p$-rational \cite{Movahhedi-Nguyen},\cite{Movahhedi-Nguyen},
\cite{Gras-Jaulent} \cite{Gras1},... In particular, a number field
$K$ is $p$-rational precisely when the Galois group $G_{S_p}(F) $ of
the maximal pro-$p$-extension of $F$ which is unramified outside $p$
is pro-$p$-free (with rank $1+r_2$, $r_2$ being the number of
complex primes of $K$). They are first introduced in
\cite{Movahhedi-Nguyen} to construct non-abelian extensions of
$\mathbb{Q}$ satisfying  the Leopoldt conjecture. Recently,
R.Greenberg  \cite{Greenberg} used $p$-rational number fields for
the construction (in a non geometric manner) of $p$-adic
representations with open image in $\mathrm{GL}_{n}(\mathbb{Z}_p)$,
$n\geq3$, of the absolute Galois group $G_{\mathbb{Q}}$.
\begin{pr}\cite{Movahhedi-Nguyen}
The number field $K$ is said to be $p$-rational if the following
equivalent conditions are satisfied:
\begin{enumerate}
\item $K$ satisfies Leopoldt's conjecture and $G_{S_p}^{ab}(K)$ is torsion-free as a
$\mathbb{Z}_p$-module.
\item
\begin{itemize}
        \item $\left\{
  \begin{array}{ll}
    \alpha \in K^{\times} \mid & \hbox{ \begin{tabular}{l}
                                    $\alpha\mathcal{O}_K=\mathfrak{a}^p$ for some fractional ideal $\mathfrak{a}$\\
                                    and $\alpha\in (K^{\times}_\mathfrak{p})^p$ for all $\mathfrak{p}\in S_p$ \\
                                 \end{tabular}
 }
  \end{array}
\right\}=(K^\times)^p$
        \item  and $\delta(K)=\sum_{v\in S_p}\delta(K_v)$.
\end{itemize}
Where for a field $F$, $\delta(F)$ is $1$ if $\mu_p \subset F$ and
$0$ otherwise.
\end{enumerate}
\end{pr}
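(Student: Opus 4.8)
The plan is to prove $(1)\Leftrightarrow(2)$ by showing that each of them is equivalent to the single assertion that $G_{S_p}(K)$ is a free pro-$p$-group, and then concluding by transitivity. This is natural because the cohomological definition of $p$-rationality recalled above already identifies ``$K$ is $p$-rational'' with $H^2(G_{S_p}(K),\mathbb{Z}/p\mathbb{Z})=0$, i.e. with $cd(G_{S_p}(K))\le 1$, i.e. with $G_{S_p}(K)$ being free pro-$p$ on $1+r_2$ generators.

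For the implication $(1)\Leftrightarrow\{G_{S_p}(K)\text{ free}\}$ I would apply directly the Proposition recalled above: a non-trivial pro-$p$-group $G$ is free if and only if $H^2(G,\mathbb{Q}_p/\mathbb{Z}_p)=0$ and $G^{ab}$ is $\mathbb{Z}_p$-torsion-free. Taking $G=G_{S_p}(K)$, the torsion-freeness of $G_{S_p}^{ab}(K)$ is exactly the second half of (1), so the only point to verify is that $H^2(G_{S_p}(K),\mathbb{Q}_p/\mathbb{Z}_p)=0$ is equivalent to Leopoldt's conjecture for $K$ at $p$. This is the classical cohomological reformulation of Leopoldt (the $\mathbb{Z}_p$-rank of $G_{S_p}^{ab}(K)$ equals $1+r_2$ precisely when that $H^2$ vanishes), which I would simply invoke.

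For the implication $(2)\Leftrightarrow\{G_{S_p}(K)\text{ free}\}$ I would compute the minimal number of relations $d_pH^2(G_{S_p}(K),\mathbb{F}_p)$ by Poitou--Tate duality. Writing $H^1(K,\mu_p)=K^\times/(K^\times)^p$ through Kummer theory, the nine-term Poitou--Tate sequence for the pair $(\mathbb{Z}/p\mathbb{Z},\mu_p)$ ramified only inside $S_p$ identifies the dual of $H^2(G_{S_p}(K),\mathbb{F}_p)$ with a strict Selmer group, namely the classes $\alpha\in K^\times/(K^\times)^p$ that are everywhere unramified outside $S_p$ (the condition $\alpha\mathcal{O}_K=\mathfrak{a}^p$) and locally trivial at every $v\in S_p$ (the condition $\alpha\in(K_v^\times)^p$), corrected by the images of the local and global invariant groups $H^0(K_v,\mu_p)$ and $H^0(K,\mu_p)$. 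Tracking these contributions yields a formula of the shape
$$d_pH^2(G_{S_p}(K),\mathbb{F}_p)=\dim_{\mathbb{F}_p}\big(V/(K^\times)^p\big)+\sum_{v\in S_p}\delta(K_v)-\delta(K),$$
where $V$ is the group appearing in the first bullet of (2) and $\delta$ is as defined. Since $\mu_p\subset K$ forces $\mu_p\subset K_v$ for every $v\in S_p$, one has $\delta(K)\le\sum_{v\in S_p}\delta(K_v)$, so both summands on the right are non-negative; hence $H^2(G_{S_p}(K),\mathbb{F}_p)=0$ holds if and only if simultaneously $V=(K^\times)^p$ and $\delta(K)=\sum_{v\in S_p}\delta(K_v)$, which are precisely the two bullets of (2).

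The hard part is this second step: setting up the Poitou--Tate nine-term sequence with the correct (strict) local conditions at the places above $p$ and matching the cokernels of the localization maps so that the terms $\delta(K_v)$ and $\delta(K)$ emerge with the displayed signs. The non-negativity of $\sum_{v\in S_p}\delta(K_v)-\delta(K)$ is what makes the vanishing of $d_pH^2$ decouple into the two independent conditions of (2); with the formula established, transitivity through ``$G_{S_p}(K)$ free'' closes the equivalence $(1)\Leftrightarrow(2)$.
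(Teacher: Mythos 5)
Your proposal is correct, but a direct comparison with the paper is not possible in the usual sense: the paper states this proposition with only a citation to Movahhedi--Nguyen and supplies no proof of its own. What you have written is, in essence, a reconstruction of the standard argument from that reference, and it is sound. Your pivot through the single condition ``$G_{S_p}(K)$ is a free pro-$p$-group'' is exactly the right move, since the paper's working definition of $p$-rationality is $H^2(G_{S_p}(K),\mathbb{Z}/p\mathbb{Z})=0$, which for a pro-$p$-group is equivalent to $cd\leq 1$, i.e.\ to freeness. For $(1)$, your use of the paper's freeness criterion ($H^2(G,\mathbb{Q}_p/\mathbb{Z}_p)=0$ together with torsion-free abelianization) combined with the cohomological form of Leopoldt's conjecture ($H^2(G_S(K),\mathbb{Q}_p/\mathbb{Z}_p)=0$ for $S\supseteq S_p$, cf.\ Neukirch--Schmidt--Wingberg, Theorem 10.3.6) is exactly how the equivalence is obtained; the only point worth making explicit is that $G_{S_p}(K)$ is non-trivial (it contains $\Gamma\cong\mathbb{Z}_p$), as the criterion requires. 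For $(2)$, the formula
\begin{equation*}
d_pH^2\bigl(G_{S_p}(K),\mathbb{F}_p\bigr)=\dim_{\mathbb{F}_p}\bigl(V/(K^\times)^p\bigr)+\sum_{v\in S_p}\delta(K_v)-\delta(K)
\end{equation*}
is the classical \v{S}afarevi\v{c}--Koch relation-rank formula (NSW, Theorem 8.7.3), and your derivation sketch is the right one: the kernel of localization $H^2(G_{S_p},\mathbb{F}_p)\to\bigoplus_{v\in S_p}H^2(K_v,\mathbb{F}_p)$ is Poitou--Tate dual to the Kummer-theoretic Selmer group $V/(K^\times)^p$, while the image has dimension $\sum_v\delta(K_v)-\delta(K)$ because the diagonal map $\mu_p(K)\to\bigoplus_{v\in S_p}\mu_p(K_v)$ is injective, so the dual map onto $H^0(K,\mu_p)^\vee$ is surjective. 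Note also that for $p$ odd the archimedean places contribute nothing, which is why the sum runs over $S_p$ only. Your non-negativity observation ($\mu_p\subset K$ forces $\mu_p\subset K_v$ for all $v\in S_p$) is precisely what lets the vanishing of $d_pH^2$ decouple into the two bullets of $(2)$, and it is the one genuinely clever point of the argument. The only thing your write-up defers is the full verification of the displayed formula, but since that is a standard citable result rather than something that needs to be re-proved, I would accept the proof as complete once the reference to NSW 8.7.3 (or to the original computation in Movahhedi--Nguyen) is inserted at that step.
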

In the case where $K$ is totally real, it is $p$-rational if and
only if $G_{S_p}^{ab}(K)\cong\mathbb{Z}_p \cong G_{S_p}(K)$.

\begin{exs}If $p=3$ we additionally assume that it is unramified in the first two statements :
\begin{enumerate}
    \item If $K$ is an imaginary quadratic field such
    that $p\nmid h_{K}$, then $K$ is $p$-rational.
    \item If $K$ is a real quadratic field, then it is $p$-rational
    if and only if $p\nmid h_K$ and the fundamental unit is not a
    $p$-power in $K_v$ for all $v|p$.
    \item If the prime $p\geq3$ is regular then the field $\mathbb{Q}(\mu_p)$ is
    $p$-rational.
\end{enumerate}
\end{exs}
Let $\Delta$ be an abelian group and let $\widehat{\Delta}$ denote
the set of the $p$-adic irreducible characters of $\Delta$. Let
$\mathcal{O}$ be the ring of integers of a finite extension of
$\mathbb{Q}_{p}$, which contains the values of all $\chi\in
\widehat{\Delta}$. For any $\mathbb{Z}_{p}[\Delta]$-module $M$ we
define  the $\chi$-quotient $M_{\chi}$ of $M$ by
\begin{equation*}
M_{\chi}=M\otimes_{\mathbb{Z}_{p}[\Delta]}\mathcal{O}(\chi)\cong(M\otimes_{\mathbb{Z}_{p}}\mathcal{O}(\chi^{-1}))_{\Delta}
\end{equation*}
where $\mathcal{O}(\chi)$ denote the ring $\mathcal{O}$ on which
$\Delta$ acts via $\chi$.
\begin{pr}\label{Proposition 6}
Let $M$ be a finite $\mathbb{Z}_{p}[\Delta]$-module. If for all
$p$-adic irreducible characters $\chi$, $M_{\chi}=0$ then $M=0$.
\end{pr}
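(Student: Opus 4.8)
The plan is to reduce the vanishing of $M$ to the vanishing of its isotypic components, using that $\Delta$ is finite of order prime to $p$ (as it is in all the applications, e.g. $\Delta\cong(\mathbb{Z}/2\mathbb{Z})^2$ when $p>3$), so that the group algebra is semisimple over $\mathbb{Q}_p$ and étale over $\mathbb{Z}_p$. Concretely, since $|\Delta|$ is invertible in $\mathbb{Z}_p$ and $\mathcal{O}$ contains all values of the characters in $\widehat{\Delta}$, the orthogonal idempotents
$$e_\chi=\frac{1}{|\Delta|}\sum_{\sigma\in\Delta}\chi^{-1}(\sigma)\,\sigma\in\mathcal{O}[\Delta],\qquad \chi\in\widehat{\Delta},$$
satisfy $e_\chi e_{\chi'}=\delta_{\chi\chi'}e_\chi$ and $\sum_{\chi\in\widehat{\Delta}}e_\chi=1$, whence $\mathcal{O}[\Delta]=\prod_{\chi\in\widehat{\Delta}}\mathcal{O}(\chi)$.

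First I would pass from $M$ to $N:=M\otimes_{\mathbb{Z}_p}\mathcal{O}$. Because $\mathcal{O}$ is a nonzero finite free $\mathbb{Z}_p$-module, the extension $\mathbb{Z}_p\hookrightarrow\mathcal{O}$ is faithfully flat, so $M=0$ if and only if $N=0$. Next, the idempotent decomposition gives $N=\bigoplus_{\chi\in\widehat{\Delta}}e_\chi N$, and I would identify each summand with the corresponding $\chi$-quotient of $M$: using $M\otimes_{\mathbb{Z}_p[\Delta]}\mathcal{O}[\Delta]=M\otimes_{\mathbb{Z}_p}\mathcal{O}=N$ and associativity of the tensor product,
$$M_\chi=M\otimes_{\mathbb{Z}_p[\Delta]}\mathcal{O}(\chi)\;\cong\;N\otimes_{\mathcal{O}[\Delta]}\mathcal{O}(\chi)\;\cong\;e_\chi N .$$
Thus the hypothesis $M_\chi=0$ for every $\chi\in\widehat{\Delta}$ forces $e_\chi N=0$ for all $\chi$, hence $N=\bigoplus_\chi e_\chi N=0$, and therefore $M=0$.

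The only real point requiring care is the semisimplicity input: the whole argument rests on $p\nmid|\Delta|$, which is what makes the $e_\chi$ integral (so that they lie in $\mathcal{O}[\Delta]$ and split off honest direct summands) and guarantees $\sum_\chi e_\chi=1$. I would state this hypothesis explicitly in the proposition. With it in hand the remaining steps are formal; the identification $M_\chi\cong e_\chi N$ is exactly the $\chi$-coinvariants description $M_\chi\cong(M\otimes_{\mathbb{Z}_p}\mathcal{O}(\chi^{-1}))_\Delta$ recorded before the statement, so no extra computation is needed. Note that finiteness of $M$ is not actually used here; it is the order of $\Delta$ being prime to $p$ that does all the work.
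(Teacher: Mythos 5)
The paper states Proposition \ref{Proposition 6} without any proof, so there is no argument of the authors to compare yours against; judged on its own, your idempotent argument is correct. Since $|\Delta|$ is invertible in $\mathbb{Z}_p$ and $\mathcal{O}$ contains all character values, the $e_\chi$ do lie in $\mathcal{O}[\Delta]$ and yield $\mathcal{O}[\Delta]\cong\prod_{\chi\in\widehat{\Delta}}\mathcal{O}(\chi)$ (this uses that $\Delta$ is abelian, as the paper assumes); your identifications $M\otimes_{\mathbb{Z}_p[\Delta]}\mathcal{O}[\Delta]\cong M\otimes_{\mathbb{Z}_p}\mathcal{O}=N$ and $M_\chi\cong e_\chi N$ are the standard base-change computations, and faithful flatness of $\mathbb{Z}_p\to\mathcal{O}$ closes the loop. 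Adding the hypothesis $p\nmid|\Delta|$ is harmless for the paper, since its only invocation of the proposition is in Proposition \ref{propostion2}, where $p\nmid[K:\mathbb{Q}]$.

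Your closing remark, however --- that finiteness of $M$ is irrelevant and that $p\nmid|\Delta|$ \emph{does all the work} --- overstates the role of semisimplicity. The proposition as literally stated (with $M$ finite and $\Delta$ finite abelian) holds without any coprimality assumption, by a Nakayama argument: write $\Delta=\Delta_p\times\Delta'$ with $\Delta_p$ the $p$-Sylow subgroup. If $M\neq0$ then, $M$ being a finite $p$-group, $V:=(M/pM)\otimes_{\mathbb{F}_p}\mathbb{F}\neq0$, where $\mathbb{F}$ is the residue field of $\mathcal{O}$; as $p\nmid|\Delta'|$ and the prime-to-$p$ roots of unity of $\mathcal{O}$ reduce injectively to $\mathbb{F}$, one has $V=\bigoplus_{\psi}V_\psi$ over the characters $\psi$ of $\Delta'$, and some $V_\psi\neq0$. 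Since $\mathbb{F}[\Delta_p]$ is local, Nakayama gives $(V_\psi)_{\Delta_p}\neq0$. Taking $\chi$ trivial on $\Delta_p$ and equal to the Teichm\"uller lift of $\psi$ on $\Delta'$, right-exactness of coinvariants yields a surjection $M_\chi\twoheadrightarrow\bigl(V(\bar{\chi}^{-1})\bigr)_\Delta=(V_\psi)_{\Delta_p}\neq0$, so $M_\chi\neq0$. Thus in the general case it is precisely the finiteness (finite generation would suffice) of $M$ that rescues the statement, exactly where your idempotents become unavailable. This is a caveat about scope, not an error in your proof.
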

We have the following interesting proposition which is easy to prove
in the semi-simple case.
\begin{pr}
\label{propostion2} Let $p$ be an odd prime number and $K$ a finite
abelian extension of $\mathbb{Q}$ such that $p\nmid [K:\mathbb{Q}]$,
then the field $K$ is $p$-rational if and only if every cyclic
extension of $\mathbb{Q}$ contained in $K$ is $p$-rational.
\end{pr}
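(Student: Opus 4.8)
The plan is to reduce everything to the triviality of a single finite module and then exploit that the group ring is semisimple over $\mathbb{Z}_p$. Write $\Delta=\mathrm{Gal}(K/\mathbb{Q})$; by hypothesis $\Delta$ is abelian of order prime to $p$, so $\mathbb{Z}_p[\Delta]$ is a semisimple (maximal) order, splitting as a product of the rings $\mathcal{O}(\chi)$ via idempotents $e_\chi$, $\chi\in\widehat{\Delta}$. Since $K/\mathbb{Q}$ and all its subfields are abelian, Leopoldt's conjecture holds for each of them (\cite{Brumer}); hence, by the definition recalled in the introduction, a subfield $L$ is $p$-rational if and only if its torsion module $\mathcal{T}_L:=\mathrm{Tor}_{\mathbb{Z}_p}\mathfrak{X}_{S_p}(L)$ vanishes. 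The group $\Delta$ (resp. $\mathrm{Gal}(L/\mathbb{Q})$) acts on $\mathcal{T}_K$ (resp. $\mathcal{T}_L$) by functoriality of the $S_p$-ramified abelian pro-$p$-extension, so these are finite $\mathbb{Z}_p[\Delta]$-modules and the whole statement becomes: $\mathcal{T}_K=0$ if and only if $\mathcal{T}_L=0$ for every cyclic $L\subseteq K$.

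The key input is a descent lemma: for $\mathbb{Q}\subseteq L\subseteq K$ with $H:=\mathrm{Gal}(K/L)$, whose order divides $[K:\mathbb{Q}]$ and is therefore prime to $p$, there is a canonical isomorphism $\mathcal{T}_L\cong e_H\mathcal{T}_K=\mathcal{T}_K^{H}$, where $e_H=|H|^{-1}\sum_{h\in H}h$. I would obtain this from the extension map $j\colon\mathcal{T}_L\to\mathcal{T}_K$ and the norm map $N\colon\mathcal{T}_K\to\mathcal{T}_L$ of abelian $p$-ramification theory, whose composite $N\circ j$ is multiplication by $[K:L]$ on $\mathcal{T}_L$: since this integer is prime to $p$ and both modules are finite $p$-groups, $j$ is split injective, while the relation $j\circ N=\sum_{h\in H}h$ identifies its image with $e_H\mathcal{T}_K$ (\cite{Gras1}). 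This is the step that genuinely uses the hypothesis $p\nmid[K:\mathbb{Q}]$ and is where the real content lies; everything else is formal.

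Granting the lemma, the forward implication is immediate: if $K$ is $p$-rational then $\mathcal{T}_K=0$, so $\mathcal{T}_L\cong e_H\mathcal{T}_K=0$ for every subfield $L$, and in particular every cyclic $L\subseteq K$ is $p$-rational. For the converse, assume every cyclic subfield is $p$-rational. By Proposition \ref{Proposition 6} it suffices to show $(\mathcal{T}_K)_\chi=0$ for each $\chi\in\widehat{\Delta}$, and in the semisimple setting $(\mathcal{T}_K)_\chi=e_\chi\mathcal{T}_K$. Fix $\chi$ and set $K_\chi:=K^{\ker\chi}$; since $\Delta/\ker\chi$ is cyclic, $K_\chi/\mathbb{Q}$ is cyclic, hence $p$-rational by hypothesis, so $\mathcal{T}_{K_\chi}=0$. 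The descent lemma with $H=\ker\chi$ gives $e_{\ker\chi}\mathcal{T}_K\cong\mathcal{T}_{K_\chi}=0$. Because $\chi$ is trivial on $\ker\chi$ one checks directly that $e_\chi e_{\ker\chi}=e_\chi$, whence $(\mathcal{T}_K)_\chi=e_\chi\mathcal{T}_K=e_\chi e_{\ker\chi}\mathcal{T}_K=0$. As $\chi$ was arbitrary, Proposition \ref{Proposition 6} yields $\mathcal{T}_K=0$, i.e. $K$ is $p$-rational.

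I would flag that the only non-formal ingredient is the descent isomorphism $\mathcal{T}_L\cong\mathcal{T}_K^{H}$ for $H$ of prime-to-$p$ order; once it is in place the argument is a clean bookkeeping of orthogonal idempotents, the two external inputs being Brumer's theorem (to dispose of Leopoldt uniformly over all subfields) and the norm/extension formalism for the module $\mathcal{T}$.
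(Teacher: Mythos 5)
Your proof is correct, and it shares the paper's overall skeleton: decompose by the $p$-adic irreducible characters $\chi$ of $\Delta$, pass to the cyclic fixed fields $K_\chi=K^{\ker\chi}$, and conclude with Proposition \ref{Proposition 6}. The genuine difference lies in which criterion for $p$-rationality is used, and hence in the descent step carrying the real content. The paper works directly with $H^{2}(G_{S_p}(\cdot),\mathbb{Z}/p\mathbb{Z})$, whose vanishing \emph{is} $p$-rationality (so Leopoldt is subsumed and needs no separate input), and its key step is the co-descent identity $H^{2}(G_{S_p}(K),\mathbb{Z}/p\mathbb{Z})_{\Delta_{\chi}}\cong H^{2}(G_{S_p}(K_{\chi}),\mathbb{Z}/p\mathbb{Z})$, i.e.\ coinvariants under the prime-to-$p$ group $\Delta_\chi$, fed into a chain of $\chi$-quotient isomorphisms. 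You instead use the Movahhedi--Nguyen characterization (Leopoldt plus $\mathcal{T}_K=\mathrm{Tor}_{\mathbb{Z}_p}\mathfrak{X}_{S_p}(K)=0$), which obliges you to invoke Ax--Brumer to settle Leopoldt uniformly over abelian subfields, and your descent lemma $\mathcal{T}_L\cong\mathcal{T}_K^{H}=e_H\mathcal{T}_K$, established via the norm/extension formalism ($N\circ j=[K:L]$, $j\circ N=\nu_H=\sum_{h\in H}h$), is the class-field-theoretic twin of the paper's cohomological step — invariants and coinvariants coincide here by semisimplicity. What your route buys: the use of the hypothesis $p\nmid[K:\mathbb{Q}]$ is made completely transparent ($N\circ j$ is an automorphism of a finite $p$-group, giving the splitting), the idempotent computation $e_\chi e_{\ker\chi}=e_\chi$ cleanly replaces the paper's iterated-coinvariants manipulation, and you actually prove the easy direction, which the paper dismisses in one line. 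What it costs is the extra external machinery (Ax--Brumer and the $j$, $N$ formalism for $\mathcal{T}$). Two small corrections: the Leopoldt theorem for abelian fields is due to Ax and Brumer, and is \emph{not} the item \cite{Brumer} of this paper's bibliography (that reference is the pseudocompact-algebras paper), so the citation must be changed; and strictly one should write $(\mathcal{T}_K)_\chi=e_\chi(\mathcal{T}_K\otimes_{\mathbb{Z}_p}\mathcal{O})$, since the idempotents attached to $p$-adically irreducible characters live in $\mathcal{O}[\Delta]$ rather than $\mathbb{Z}_p[\Delta]$ — a harmless normalization that does not affect the argument.
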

\begin{proof}
It suffices to show the second implication, i.e., if every cyclic
sub-extension is $p$-rational then $K$ is $p$-rational.\vskip 6pt
Let $\Delta$ be the Galois group of $K$ over $\mathbb{Q}$ and denote
$\widehat{\Delta}$ the corresponding group of irreducible character.
Let $\chi$ be such an irreducible character and let $K_{\chi}$ be
its fixed field, if $\Delta_{\chi}=Gal(K/K_{\chi})$ and using the
definition of the $\chi$-quotient, we obtain
\begin{eqnarray*}
  H^{2}(G_{S_{p}}(K),\mathbb{Z}/p\mathbb{Z})_{\chi} &\cong&  (H^{2}(G_{S_{p}}(K),\mathbb{Z}/p\mathbb{Z})\otimes\mathcal{O}(\chi^{-1}))_{\Delta}  \\
   &=& ((H^{2}(G_{S_{p}}(K),\mathbb{Z}/p\mathbb{Z})\otimes\mathcal{O}(\chi^{-1}))_{\Delta_{\chi}})_{\Delta/\Delta_{\chi}}\\
   &\cong&
   (H^{2}(G_{S_{p}}(K),\mathbb{Z}/p\mathbb{Z})_{\Delta_{\chi}}\otimes\mathcal{O}(\chi^{-1}))_{\Delta/\Delta_{\chi}}\\
   &\cong&
   (H^{2}(G_{S_{p}}(K_{\chi}),\mathbb{Z}/p\mathbb{Z})\otimes\mathcal{O}(\chi^{-1}))_{\Delta/\Delta_{\chi}}.
\end{eqnarray*}
Since $K_{\chi}$ is $p$-rational,
$H^{2}(G_{S_{p}}(K_{\chi}),\mathbb{Z}/p\mathbb{Z})=0$. It follows
that
\begin{equation*}
 H^{2}(G_{S_{p}}(K),\mathbb{Z}/p\mathbb{Z})_{\chi}=0.
 \end{equation*}
  Proposition \ref{Proposition 6} shows that
\begin{equation*}
H^{2}(G_{S_{p}}(K),\mathbb{Z}/p\mathbb{Z})=0.
\end{equation*}
This gives the desired implication since the set of cyclic
extensions is one-to-one with the set of irreducible characters.
\end{proof}
For example if $p$ is an odd prime, then
$\mathbb{Q}(\sqrt{d_1},\sqrt{d_2})$ is $p$-rational if and only if
the fields $\mathbb{Q}(\sqrt{d_1})$, $\mathbb{Q}(\sqrt{d_2})$ and
$\mathbb{Q}(\sqrt{d_1 d_2})$ are $p$-rationals.
\section{\textbf{On the structure of the $\Lambda(G_{\infty, S})$-module $\mathcal{X}$}}
Let $K$ be a number field and $p$ an odd prime number. Let $S$ be a
finite set of primes of $K$ which is disjoint from $S_p$ and let
$\Sigma=S\cup S_p$. Recall that $\mathcal{X}$ is the abelianizer of
the group $\mathcal{H}$, where
$\mathcal{H}=\mathrm{Gal}(K_{\Sigma}/K_{\infty,S})$.
\subsection{The structure of the $\Lambda(G_{\infty, S})$-module $\mathcal{X}$}
We are going to use local $p$-adic class field theory in the
following, for this we introduce some notations.\\
Let $v$ be a place of $K$ and let $\mathcal{U}_v$ be the completion
of the $p$-adic units of $\mathcal{O}_v$ of the field $K_v$ defined
as the projective limit
$\lim_n\mathcal{O}^{\times}_v/\mathcal{O}_{v}^{\times p^n}$. The
locally cyclotomic units $\widetilde{\mathcal{U}}_v$ of $K_v$ are
the units of $K_v$ which are norms in the cyclotomic
$\mathbb{Z}_p$-extension $K_{v,\infty}$ of $K_v$.\\
 In particular if
$v\nmid p$, we have $\widetilde{\mathcal{U}}_v=\mathcal{U}_v$. By
local class field theory the group $\widetilde{\mathcal{U}}_v$
correspond to the compositum of the cyclotomic
$\mathbb{Z}_p$-extension of $K_v$ and
the unramified $\mathbb{Z}_p$-extension $K^{un}_v$ of $K_v$.\\
If $T$ is a finite set of places of $K$, set
$\mathcal{U}_{T}=\bigcup_{v\not\in T}\mathcal{U}_v$,
$\widetilde{\mathcal{U}}_{T}=\bigcup_{v\not\in
T}\widetilde{\mathcal{U}}_v$ and $\mathcal{U}_p=\prod_{v\in S_p}
\mathcal{U}_v$. We are interested in the freeness of the
$\Lambda(G_{\infty,S})$-module $\mathcal{X}$.
\begin{theo} \label{theo3}
Let $K$ be a $p$-rational field not containing the $p^{th}$-roots of
unity and satisfying the following conditions:
\begin{enumerate}
    \item  The group
    $i_{p}(\mathcal{E}_S)$  is a direct summand of
    $\mathcal{U}_{p}$.
    \item $s=1$.
\end{enumerate}
Then the $\Lambda(G_{\infty, S})$-module $\mathcal{X}$ is free of
rank $r_2$.
\end{theo}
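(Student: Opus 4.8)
The plan is to apply Theorem \ref{theo} to the pair $\mathcal{G}=G_{S\cup S_p}(K)$ and $\mathcal{H}=\mathrm{Gal}(K_{S\cup S_p}/K_{\infty,S})$, so that $G=\mathcal{G}/\mathcal{H}=G_{\infty,S}$ and $\mathcal{X}=\mathcal{H}^{ab}$, and then to read off the rank from the Euler--characteristic formula of \cite[Proposition 1.1 and Theorem 1.4]{Nguyen3} (Theorem \ref{theo1}). First I would record the structural inputs that do not use the two numbered hypotheses: since $p$ is odd and $\Sigma=S\cup S_p\supseteq S_p$, the group $\mathcal{G}$ is a finitely presented pro-$p$-group with $cd(\mathcal{G})\le 2$; by \cite[Theorem 2.2]{Nguyen3} one has $H^2(\mathcal{H},\mathbb{Q}_p/\mathbb{Z}_p)=0$; and finite generation of $\mathcal{X}$ over $\Lambda(G)$ follows from Nakayama's Lemma applied to $\mathcal{X}/m_G\mathcal{X}$.

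Next I would use hypothesis (2), that $s=1$, together with $p$-rationality to pin down $G$. With a single place of $K_\infty$ above $S$, the maximal pro-$p$ extension of $K_\infty$ unramified outside that place is procyclic, so $\mathrm{Gal}(K_{\infty,S}/K_\infty)\cong\mathbb{Z}_p$ and $G_{\infty,S}$ is an extension of $\Gamma\cong\mathbb{Z}_p$ by $\mathbb{Z}_p$; hence $G\cong\mathbb{Z}_p\rtimes\mathbb{Z}_p$ is a $p$-adic analytic pro-$p$-group of dimension $2$ without $p$-torsion, and therefore $cd(G)=2$. This is the cohomological dimension claim deferred in the Introduction. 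Together with $p$-rationality, which makes $G_{S_p}(K)$ free pro-$p$, this should yield the remaining running hypothesis $H^2(\mathcal{G},\mathbb{Q}_p/\mathbb{Z}_p)=0$ for the enlarged set $\Sigma$; this vanishing I would justify from the $p$-rationality of $K$ and the assumption $\mu_p\not\subset K$.

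The heart of the argument is to show that the natural map $Tor_{\mathbb{Z}_p}\mathcal{G}^{ab}=Tor_{\mathbb{Z}_p}\mathfrak{X}_{S\cup S_p}(K)\longrightarrow G_{\infty,S}^{ab}$ is injective, and this is exactly where hypothesis (1) enters. I would describe $\mathcal{G}^{ab}$ through the pro-$p$ reciprocity map of global class field theory, identifying its $\mathbb{Z}_p$-torsion in terms of the local units $\mathcal{U}_p=\prod_{v\in S_p}\mathcal{U}_v$ and the image $\iota_p(\mathcal{E}_S)$ of the global units, following \cite[Theorem 2.3]{Jaulent} and \cite[Proposition 1.2]{Salle2}. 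The hypothesis that $\iota_p(\mathcal{E}_S)$ is a direct summand of $\mathcal{U}_p$ removes the contribution to the kernel coming from the local--global comparison, so that the torsion injects; this is precisely the computation that Proposition \ref{proposition7} repackages as the stated equivalence with the vanishing of $\mathrm{Tor}_{\mathbb{Z}_p}$ of the quotient $\prod_{v\in S_p}\tilde{\mathcal{U}}_v/(\iota_p(\mathcal{E}_S)\cap\prod_{v\in S_p}\tilde{\mathcal{U}}_v)$.

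With all hypotheses verified, Theorem \ref{theo} yields the freeness of $\mathcal{X}$. For the rank I would invoke Theorem \ref{theo1}: since $G$ is nontrivial, $\delta_{G,1}=0$, and since $H^2(\mathcal{H},\mathbb{Q}_p/\mathbb{Z}_p)=0$ Pontryagin duality gives $H_2(\mathcal{H},\mathbb{Z}_p)=0$, so that $rank_{\Lambda(G)}(\mathcal{X})=-\chi(\mathcal{G})$. Tate's global Euler--Poincar\'e characteristic formula, with $p$ odd so that the archimedean places contribute nothing to $\hat{H}^0$, gives $\chi(\mathcal{G})=-r_2$, whence $rank_{\Lambda(G)}(\mathcal{X})=r_2$. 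I expect the main obstacle to be the injectivity step: controlling the kernel of $Tor_{\mathbb{Z}_p}\mathcal{G}^{ab}\to G^{ab}$ requires translating the direct-summand hypothesis (1) through the reciprocity map into a statement about locally cyclotomic units, and securing the vanishing $H^2(\mathcal{G},\mathbb{Q}_p/\mathbb{Z}_p)=0$ after enlarging $S_p$ to $S\cup S_p$ is the delicate point that the $p$-rationality hypothesis must carry.
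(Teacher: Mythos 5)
Your skeleton is the paper's own: reduce to Theorem \ref{theo} with $\mathcal{G}=G_{\Sigma}(K)$, $G=G_{\infty,S}$, get finite generation from Nakayama via Hochschild--Serre, feed hypothesis (1) through the reciprocity map to control $\mathrm{Tor}_{\mathbb{Z}_p}\mathcal{G}^{ab}\to G^{ab}$, and read the rank off the Euler-characteristic formula of Theorem \ref{theo1}. But there is one genuine gap, at the determination of $G$. Your assertion that $s=1$ makes ``the maximal pro-$p$ extension of $K_\infty$ unramified outside that place'' procyclic is false as stated: $K_{\infty,S}$ contains the maximal \emph{unramified} pro-$p$ extension of $K_\infty$, so $\mathrm{Gal}(K_{\infty,S}/K_\infty)$ surjects onto the unramified Iwasawa module, which for a general $K$ is infinite and far from cyclic (any positive $\lambda$- or $\mu$-invariant kills your claim). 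The single tame place contributes at most one copy of $\mathbb{Z}_p(1)$ through inertia; everything else has to be shown to vanish, and that is precisely the work $p$-rationality does. The paper's mechanism is the exact sequence of Theorem \ref{theo2} (NSW 11.3.5), giving $0\to\mathcal{W}\to\mathrm{Tor}_{\Lambda(\Gamma)}\mathfrak{X}_{\Sigma}(K_{\infty})\to\mathrm{Tor}_{\Lambda(\Gamma)}\mathfrak{X}_{S_p}(K_{\infty})\to0$ with $\mathcal{W}\cong\mathbb{Z}_p(1)^{s}$, together with the fact that $p$-rationality of $K$ propagates to every layer $K_n$, so that by Theorem \ref{theo1} one gets $\mathrm{Tor}_{\Lambda(\Gamma)}\mathfrak{X}_{S_p}(K_{\infty})\cong\varprojlim_n\mathrm{Tor}_{\mathbb{Z}_p}\mathfrak{X}_{S_p}(K_n)=0$; hence $\mathfrak{X}_{S}(K_{\infty})\cong\mathcal{W}\cong\mathbb{Z}_p(1)$, $G_{\infty,S}\cong\mathbb{Z}_p\rtimes\mathbb{Z}_p$ (non-direct, whence $H_2(G_{\infty,S},\mathbb{Z}_p)=0$), and $cd(G_{\infty,S})\le2$. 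You gesture at ``together with $p$-rationality'' but supply no mechanism; this is the bulk of the paper's proof and cannot be waved through. Note also that nontriviality of the $\mathbb{Z}_p(1)$ uses the standing assumption $N(v)\equiv1\pmod{p}$ for $v\in S$, which you never invoke.

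Two calibration points, one in each direction. The vanishing $H^2(\mathcal{G},\mathbb{Q}_p/\mathbb{Z}_p)=0$ that you single out as the delicate point is not delicate: for any $\Sigma\supseteq S_p$ it is equivalent to Leopoldt's conjecture for $K$, which holds because $K$ is $p$-rational; the paper simply cites \cite{Nguyen3}, and no subtlety arises from enlarging $S_p$ to $\Sigma$. Conversely, a point you pass over silently is where $\mu_p\not\subset K$ actually enters the torsion computation: one needs $\mathcal{U}_p$ to be $\mathbb{Z}_p$-free of rank $[K:\mathbb{Q}]$, i.e.\ $\mu_p\not\subset K_v$ for all $v\in S_p$, which follows from the $p$-rationality condition $\delta(K)=\sum_{v\in S_p}\delta(K_v)$ combined with $\mu_p\not\subset K$; only then does hypothesis (1) force $\mathrm{Tor}_{\mathbb{Z}_p}$ of $\prod_{v\in S_p}\tilde{\mathcal{U}}_v\big/\bigl(\imath_p(\mathcal{E}_S)\cap\prod_{v\in S_p}\tilde{\mathcal{U}}_v\bigr)$ to vanish, exactly as in the paper's idelic identification. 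On the plus side, your rank count ($\delta_{G,1}=0$, $H_2(\mathcal{H},\mathbb{Z}_p)=0$ by Pontryagin duality from $H^2(\mathcal{H},\mathbb{Q}_p/\mathbb{Z}_p)=0$, and $\chi(\mathcal{G})=-r_2$ by Tate's global Euler--Poincar\'e formula for odd $p$) is correct and is actually spelled out more completely than in the paper, whose written proof stops at the torsion vanishing and leaves the value $r_2$ implicit.
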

\noindent{\textbf{Proof.}} By class field theory and thanks to the
weak Leopoldt conjecture (which is true for the cyclotomic
$\mathbb{Z}_p$-extension) one has the following exact sequence
(theorem \ref{theo2})
\begin{center}
\[
  \xymatrix @C=1.5pc{0 \ar@[>][r]&\bigoplus_{v\in \Sigma\setminus S_p}
  \mathrm{Ind}^{\Gamma_{v}}_{\Gamma}(\mathrm{T}(K_{v}(p)/K_{v})_{G_{(K_{\infty})_{v}}})\ar@[>][r]&
  \mathrm{Tor}_{\Lambda(\Gamma)}(\mathfrak{X}_{\Sigma}(K_{\infty}))\ar@[>][r]&\mathrm{Tor}_{\Lambda(\Gamma)}(\mathfrak{X}_{S_p}(K_{\infty}))\ar@[>][r]&0},
\]
\end{center}
since $\mu_p\subset K_{v}$ for $v\in S$ and $v$ is finitely
decomposed in $K_{\infty}/K$,
$\mathrm{T}(K_{v}(p)/K_{v})_{G_{(K_{\infty})_{v}}}$ is isomorphic to
$\mathbb{Z}_p (1)$. Hence we obtain the exact sequence,
\begin{center}
\[
  \xymatrix @C=3pc{0 \ar@[>][r]&\mathcal{W}\ar@[>][r]&\mathrm{Tor}_{\Lambda(\Gamma)}(\mathfrak{X}_{\Sigma}(K_{\infty}))
  \ar@[>][r]&\mathrm{Tor}_{\Lambda(\Gamma)}(\mathfrak{X}_{S_p}(K_{\infty}))\ar@[>][r]&0}
\]
\end{center}
 where $\mathcal{W}$ is the direct sum of $s$ copy of
$\mathbb{Z}_p(1)$.\\
Since $K$ is $p$-rational, hence $K_n$ is $p$-rational, by theorem
\ref{theo1}, we see that
$\mathrm{Tor}_{\Lambda(\Gamma)}(\mathfrak{X}_{S_p}(K_{\infty}))$ is
trivial, we have that
$\mathrm{Tor}_{\Lambda(\Gamma)}(\mathfrak{X}_{\Sigma}(K_{\infty}))\simeq
\mathcal{W}\simeq\mathfrak{X}_{S}(K_{\infty})$, and hence
$\mathfrak{X}_{S}(K_{\infty})\simeq\mathbb{Z}_p$, we can see that
$G_{\infty, S}\simeq\mathbb{Z}_p\rtimes\mathbb{Z}_p$, hence the
cohomology group $H_2(G_{\infty, S},\mathbb{Z}_p)$ is trivial. The
$\mathbb{Z}_p$-freeness of $\mathfrak{X}_S(K_{\infty})$ gives that
the group $G_S(K_{\infty}))$ is also $\mathbb{Z}_p$-free of rank
$1$, and hence
$$cd(G_{\infty, S})\leq cd(G_S(K_{\infty}))+
cd(\Gamma)=2,$$ the cohomological dimension of
$\mathrm{G}_{\infty,S}$ is at most $2$. It is known that
$G_{\Sigma}(K)$
is of cohomolocical dimension at most $2$.\\
Let prove that $\mathcal{X}$ is finitely generated as
$\Lambda(G_{\infty, S})$-module.\\
Since $G_{\infty, S}\simeq\mathbb{Z}_p\rtimes\mathbb{Z}_p$,
$\Lambda(G_{\infty, S})$ is Noetherian, then $G_{\infty, S}$ is of
finite presentation.\\
The Hochschild-Serre spectrale sequence applied to the short exact
sequence
$$1\longrightarrow \mathcal{H} \longrightarrow G_{\Sigma}(K) \longrightarrow G_{\infty, S} \longrightarrow 1$$
shows that
\begin{center}
\[
  \xymatrix @C=2pc{...H^1(G_{\Sigma}(K), \mathbb{F}_p)\ar@[>][r]&H^1(\mathcal{H}, \mathbb{F}_p)^{G_{\infty, S}}\ar@[>][r]&
  H^2(G_{\infty, S}, \mathbb{F}_p)...}
\]
\end{center}
Since $H^1(G_{\Sigma}(K), \mathbb{F}_p)$ and $H^2(G_{\infty, S},
\mathbb{F}_p)$ are finite,
$\mathcal{X}_{G_{\infty,S}}/p=(H^1(\mathcal{H},
\mathbb{F}_p)^{G_{\infty, S}})^{\ast}$ is also finite, where $\ast$
is the Pontrjagin dual. Then by Nakayama's lemma, one has
$\mathcal{X}$ is finitely generated as
$\Lambda(G_{\infty, S})$-module.\\
 The field $K$ satisfy the Leopoldt conjecture, then the groups
$H^2(G_{\Sigma}(K), \mathbb{Q}_p/\mathbb{Z}_p)$ and
$H^2(\mathcal{H}, \mathbb{Q}_p/\mathbb{Z}_p)$ are trivial
(see\cite{Nguyen3}). By the theorem \ref{theo} the
$\Lambda(G_{\infty,S})$-module $\mathcal{X}$ is free if the morphism
of restriction
$\mathrm{Tor}_{\mathbb{Z}_p}G_{\Sigma}(K)^{ab}\longrightarrow
G_{\infty, S}^{ab}$ is injective.\\
We have the following exact sequence
\begin{center}
\[
  \xymatrix @C=4pc{1
  \ar@[>][r]&\frac{\mathcal{K}^{\times}\widetilde{\mathcal{U}_{S}}}{\mathcal{K}^{\times}\mathcal{U}_{\Sigma}}
  \ar@[>][r]&\frac{\mathcal{J}_{\mathcal{K}}}{\mathcal{K}^{\times}\mathcal{U}_{\Sigma}}
  \ar@[>][r]&\frac{\mathcal{J}_{\mathcal{K}}}{\mathcal{K}^{\times}\widetilde{\mathcal{U}_{S}}}\ar@[>][r]&1},
  \]
\end{center}
where $\mathcal{K}^{\times}=\varprojlim_n
K^{\times}/(K^{\times})^{p^n}$ is the $p$-adic compactified of
$K^\times$ and $\mathcal{J}_{\mathcal{K}}$ is the $p$-adic
compactified of the group of idèles of $K$.\\
By the class field $p$-adic correspondence we have the following
exact sequence
\begin{center}
\[
  \xymatrix @C=3pc{1
  \ar@[>][r]&\mathrm{Gal}(K_\Sigma/K_{\infty, S})^{ab}\ar@[>][r]&\mathrm{Gal}(K_\Sigma/K)^{ab}\ar@[>][r]&\mathrm{Gal}(K_{\infty, S}/K)^{ab}\ar@[>][r]&1},
  \]
\end{center}
and note that
$$\frac{\mathcal{K}^{\times}\widetilde{\mathcal{U}_{S}}}{\mathcal{K}^{\times}\mathcal{U}_{\Sigma}}\simeq
   \frac{\prod_{v\in S_p} \tilde{\mathcal{U}}_v}{\imath_p(\mathcal{E}_S)\cap \prod_{v\in S_p} \tilde{\mathcal{U}}_v}.$$
By hypotheses $\imath_p(\mathcal{E}_S)$ is a direct summand of
$\mathcal{U}_p$, and the group $\mathcal{U}_{p}$ is a free
$\mathbb{Z}_p$-module of rank $[K:\mathbb{Q}]$, then we have also
$$\mathrm{Tor}_{\mathbb{Z}_p}\Big(\frac{\prod_{v\in S_p}\tilde{\mathcal{U}}_v}{\imath_p(\mathcal{E}_S)\cap
 \prod_{v\in S_p} \tilde{\mathcal{U}}_v}\Big)=\{1\}.$$
\hfill $\square$\vskip 7pt
\begin{pr}
\label{proposition7} Let $K$ be a $p$-rational field. Suppose that
$s=1$. Then the $\Lambda(G_{\infty, S})$-module $\mathcal{X}$ is
free of rank $r_2$ if and only if,
$$\mathrm{Tor}_{\mathbb{Z}_p}\Big(\frac{\prod_{v\in S_p}\tilde{\mathcal{U}}_v}{\imath_p(\mathcal{E}_S)\cap
 \prod_{v\in S_p} \tilde{\mathcal{U}}_v}\Big)=\{1\}.$$
\end{pr}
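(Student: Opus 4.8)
The plan is to recognise Proposition \ref{proposition7} as the sharp, two-sided form of the freeness criterion that was used in only one direction in Theorem \ref{theo3}: the sufficient condition ``$\imath_p(\mathcal{E}_S)$ is a direct summand of $\mathcal{U}_p$'' is to be replaced by the exact condition that a certain local quotient be $\mathbb{Z}_p$-torsion-free. The engine is Proposition \ref{propostion1} (the \emph{if and only if} refinement of Theorem \ref{theo}), applied to $\mathcal{G}=G_\Sigma(K)$, $\mathcal{H}=\mathrm{Gal}(K_\Sigma/K_{\infty,S})$ and $G=G_{\infty,S}$, combined with the class field theory description of the kernel of $G_\Sigma(K)^{ab}\to G_{\infty,S}^{ab}$.

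First I would check that all the standing hypotheses of Proposition \ref{propostion1} hold, exactly as in the proof of Theorem \ref{theo3}: one has $cd(\mathcal{G})\leq 2$ and $H^2(\mathcal{H},\mathbb{Q}_p/\mathbb{Z}_p)=0$, while Leopoldt for $K$ gives $H^2(\mathcal{G},\mathbb{Q}_p/\mathbb{Z}_p)=0$, hence $H_2(\mathcal{G},\mathbb{Z}_p)=0$ by Pontryagin duality. The hypothesis $s=1$ forces $\mathfrak{X}_S(K_\infty)\simeq\mathbb{Z}_p$ and $G_{\infty,S}\simeq\mathbb{Z}_p\rtimes\mathbb{Z}_p$ (a non-direct product), so that $cd(G)\leq 2$ and $H_2(G,\mathbb{Z}_p)=0$. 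Proposition \ref{propostion1} then applies and yields
\[
\mathcal{X}\ \text{free}\ \Longleftrightarrow\ \big(\mathrm{Tor}_{\mathbb{Z}_p}G_\Sigma(K)^{ab}\longrightarrow G_{\infty,S}^{ab}\big)\ \text{injective.}
\]

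Next I would identify the relevant kernel. The idelic exact sequence
\[
1\longrightarrow \frac{\mathcal{K}^\times\widetilde{\mathcal{U}_S}}{\mathcal{K}^\times\mathcal{U}_\Sigma}\longrightarrow \frac{\mathcal{J}_\mathcal{K}}{\mathcal{K}^\times\mathcal{U}_\Sigma}\longrightarrow \frac{\mathcal{J}_\mathcal{K}}{\mathcal{K}^\times\widetilde{\mathcal{U}_S}}\longrightarrow 1
\]
is carried by the reciprocity isomorphisms of pro-$p$ global class field theory to the Galois exact sequence $1\to \mathrm{Gal}(K_\Sigma/K_{\infty,S})^{ab}\to G_\Sigma(K)^{ab}\to G_{\infty,S}^{ab}\to 1$, and the proof of Theorem \ref{theo3} identifies the kernel $\mathcal{K}^\times\widetilde{\mathcal{U}_S}/\mathcal{K}^\times\mathcal{U}_\Sigma$ with
\[
W:=\frac{\prod_{v\in S_p}\tilde{\mathcal{U}}_v}{\imath_p(\mathcal{E}_S)\cap\prod_{v\in S_p}\tilde{\mathcal{U}}_v}.
\]
Thus there is a short exact sequence $0\to W\to G_\Sigma(K)^{ab}\to G_{\infty,S}^{ab}\to 0$ of $\mathbb{Z}_p$-modules, with $W$ realised as a submodule of $G_\Sigma(K)^{ab}$.

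The final step is elementary module theory. Restricting the surjection to torsion, its kernel is $\mathrm{Tor}_{\mathbb{Z}_p}G_\Sigma(K)^{ab}\cap W$; since $W$ is a $\mathbb{Z}_p$-submodule, this intersection is exactly $\mathrm{Tor}_{\mathbb{Z}_p}W$. Hence $\mathrm{Tor}_{\mathbb{Z}_p}G_\Sigma(K)^{ab}\to G_{\infty,S}^{ab}$ is injective if and only if $\mathrm{Tor}_{\mathbb{Z}_p}(W)=\{1\}$, which is the asserted condition; combined with the equivalence from Proposition \ref{propostion1} this proves the criterion, and when $\mathcal{X}$ is free its rank is $r_2$ by Theorem \ref{theo1} (with $\delta_{G,1}=0$ and $H_2(\mathcal{H},\mathbb{Z}_p)=0$), exactly as in Theorem \ref{theo3}. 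I expect the only delicate point to be the compatibility in the kernel identification --- that reciprocity genuinely matches the submodule $W$ with $\ker\big(G_\Sigma(K)^{ab}\to G_{\infty,S}^{ab}\big)$ rather than merely producing an abstract isomorphism; this is where the control of the reciprocity kernel via \cite{Jaulent} and \cite{Salle2} enters, after which the torsion computation is routine.
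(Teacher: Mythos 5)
Your proposal is correct and follows exactly the paper's route: the paper's own proof is the one-line instruction to combine Proposition \ref{propostion1} with the proof of Theorem \ref{theo3}, which is precisely what you carry out in detail (verifying $cd(G)\leq 2$, $H_2(\mathcal{G},\mathbb{Z}_p)=H_2(G,\mathbb{Z}_p)=0$ under $s=1$ and $p$-rationality, identifying the kernel of $G_\Sigma(K)^{ab}\to G_{\infty,S}^{ab}$ idelically with $\prod_{v\in S_p}\tilde{\mathcal{U}}_v\big/\bigl(\imath_p(\mathcal{E}_S)\cap\prod_{v\in S_p}\tilde{\mathcal{U}}_v\bigr)$, and reducing injectivity on torsion to torsion-freeness of that kernel). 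Your explicit observation that $\mathrm{Tor}_{\mathbb{Z}_p}\mathcal{G}^{ab}\cap W=\mathrm{Tor}_{\mathbb{Z}_p}(W)$, and your flagging of the reciprocity-compatibility point via \cite{Jaulent} and \cite{Salle2}, merely make explicit what the paper leaves implicit.
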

\noindent{\textbf{Proof.}}
 Using proposition \ref{propostion1} and the proof of theorem \ref{theo3} one obtains
the desired result. \hfill $\square$\vskip 7pt
\subsection{Examples}\label{example1}
We consider the following field $K=\mathbb{Q}(\sqrt{pq},\sqrt{-d})$
where $p$ and $q$ are two distinct odd primes such that $p>3$ and
$q\equiv-1\pmod{p}$, $d$ be a positive squarefree integer such that
$p\nmid d$ and $q\nmid d$. Suppose that $-d$ is not a quadratic
residue modulo $p$ and $q$. Since $p$ and $q$ are ramified in the
field $K^{+}=\mathbb{Q}(\sqrt{pq})$ there is one prime
$\mathfrak{p}$ above $p$ and one prime $\mathfrak{q}$ above $q$. Let
$\epsilon$ be the fundamental unit of $K^{+}$. Let
$S_p=\{\mathfrak{p}\}$, $S=\{\mathfrak{q}\}$ and denote
$\Sigma=S\cup S_p$, $L_1=\mathbb{Q}(\sqrt{-dpq})$ and
$L_2=\mathbb{Q}(\sqrt{-d})$.
\begin{pr}
\label{propostion4} The field $K$ is $p$-rational if and only if the
following conditions are satisfied :
\begin{itemize}
    \item the class numbers of $K^{+}$, $L_1$ and $L_2$ are prime to $p$,
    \item the fundamental unit $\epsilon$ is not a $p$-power in the
          completion $K_{\mathfrak{p}}^{+}$.
\end{itemize}
\end{pr}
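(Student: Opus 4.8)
The plan is to exploit the fact that $K=\mathbb{Q}(\sqrt{pq},\sqrt{-d})$ is a biquadratic field and to reduce its $p$-rationality to that of its three quadratic subfields via Proposition \ref{propostion2}. First I would observe that $\mathrm{Gal}(K/\mathbb{Q})\cong(\mathbb{Z}/2\mathbb{Z})^2$, so $[K:\mathbb{Q}]=4$; since the standing hypothesis $p>3$ forces $p\nmid 4$, the hypotheses of Proposition \ref{propostion2} are satisfied. That proposition then yields that $K$ is $p$-rational if and only if each of its three quadratic subfields is $p$-rational.

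Next I would identify these subfields explicitly as $K^{+}=\mathbb{Q}(\sqrt{pq})$, $L_2=\mathbb{Q}(\sqrt{-d})$, and $L_1=\mathbb{Q}(\sqrt{-dpq})$, the last being generated by the product $\sqrt{pq}\cdot\sqrt{-d}$. Because $d,p,q$ are positive, both $L_1$ and $L_2$ are imaginary quadratic while $K^{+}$ is real quadratic; in particular $K$ is totally imaginary with $r_2=2$, consistent with the rank claimed in Theorem \ref{theo3}.

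I would then invoke the $p$-rationality criteria for quadratic fields recorded among the Examples. For the imaginary fields $L_1$ and $L_2$, $p$-rationality is equivalent to $p\nmid h_{L_1}$ and $p\nmid h_{L_2}$ respectively---here the assumption $p>3$ lets me bypass the exceptional unramifiedness condition required at $p=3$. For the real field $K^{+}$, $p$-rationality is equivalent to $p\nmid h_{K^{+}}$ together with the condition that the fundamental unit $\epsilon$ is not a $p$-th power in $K^{+}_v$ for every place $v\mid p$. Since $p\mid pq$, the prime $p$ ramifies in $K^{+}$, so there is a unique prime $\mathfrak{p}$ above $p$ and the local condition collapses to the single requirement $\epsilon\notin(K^{+}_{\mathfrak{p}})^p$.

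Assembling the three criteria gives precisely the stated equivalence: $K$ is $p$-rational if and only if $h_{K^{+}}$, $h_{L_1}$, $h_{L_2}$ are all prime to $p$ and $\epsilon$ is not a $p$-th power in $K^{+}_{\mathfrak{p}}$. The only step demanding care---rather than a genuine obstacle---is the legitimacy of applying Proposition \ref{propostion2}, which rests entirely on the coprimality $p\nmid[K:\mathbb{Q}]$ guaranteed by $p>3$; the remainder is bookkeeping with the known quadratic criteria and with the ramification of $p$ in $K^{+}$ that reduces the unit condition to the single prime $\mathfrak{p}$.
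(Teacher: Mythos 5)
Your overall route is certainly the one the paper intends: the paper states no proof of Proposition \ref{propostion4} at all, but immediately after Proposition \ref{propostion2} it records exactly your reduction (``$\mathbb{Q}(\sqrt{d_1},\sqrt{d_2})$ is $p$-rational if and only if $\mathbb{Q}(\sqrt{d_1})$, $\mathbb{Q}(\sqrt{d_2})$ and $\mathbb{Q}(\sqrt{d_1 d_2})$ are $p$-rational''), and your identification of the three quadratic subfields, the use of $p>3$ to get $p\nmid[K:\mathbb{Q}]$ and to avoid the $p=3$ caveat in the Examples, and the collapse of the real-quadratic unit condition to the single ramified prime $\mathfrak{p}$ are all correct. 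The ``if'' direction of the proposition is therefore complete on the basis of results stated in the paper: sufficiency of $p\nmid h$ for the imaginary fields $L_1,L_2$ (Examples, item (1)), the stated equivalence for the real field $K^{+}$ (item (2)), and Proposition \ref{propostion2}.

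The genuine gap is in the ``only if'' direction, where you assert that for the imaginary quadratic fields $L_1$ and $L_2$ ``$p$-rationality is \emph{equivalent} to $p\nmid h$.'' The paper's Examples item (1) states only the implication $p\nmid h_K\Rightarrow K$ $p$-rational, and the converse is not formal: writing the class-field-theoretic exact sequence $0\to\mathcal{U}_p\to G_{S_p}^{ab}(K)\to A_K\to 0$ for an imaginary quadratic $K$ with $p\geq 5$ (here $\overline{\mathcal{E}}=0$ and $\mathcal{U}_p\cong\mathbb{Z}_p^2$ is torsion-free), one only gets that the torsion $\mathcal{T}$ \emph{injects} into the $p$-class group $A_K$; torsion-freeness of $G_{S_p}^{ab}$ does not by itself force $A_K=0$, since the extension of a finite cyclic $A_K$ by $\mathbb{Z}_p^2$ can be torsion-free. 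Concretely, $\mathcal{T}=0$ with $A_K\neq 0$ means exactly that the $p$-Hilbert class field of $K$ lies inside the compositum $\widetilde{K}$ of the $\mathbb{Z}_p$-extensions of $K$, i.e.\ that ramification above $p$ in the anticyclotomic direction is delayed --- a phenomenon that does occur for imaginary quadratic fields and is not excluded by anything you (or the paper) invoke. Equivalently, in terms of the Movahhedi--Nguyen criterion quoted in the paper, $p\mid h_K$ produces a class $[\mathfrak{a}]$ of order $p$ with $\mathfrak{a}^p=(\alpha)$, but $p$-rationality only rules out those $\alpha$ that are local $p$-th powers at every $v\mid p$, so no contradiction arises directly. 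To close the ``only if'' direction you would need an actual argument (or a citation) that for these particular fields --- $p$ ramified in $L_1$ and inert in $L_2$ under the hypothesis that $-d$ is a non-residue mod $p$ --- a nontrivial $p$-class group forces nontrivial torsion; as written, that step is unproved, and it is the only substantive content of the necessity claim beyond the real-quadratic criterion. Note that this gap is inherited from the paper itself, which uses only the ``if'' direction in Theorem \ref{theo4} and Example \ref{example2}.
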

We have the following theorem on the freeness of $\mathcal{X}$.
\begin{theo} \label{theo4}
Suppose that $K$ is a $p$-rational field. Suppose that
$\imath_p(\mathcal{E}_S)$ is a direct summand of
$\mathcal{U}_{\mathfrak{p}}$ and let $s=1$. Then the
$\Lambda(G_{\infty, S})$-module $\mathcal{X}$ is free of rank $2$ :
$$\mathcal{X}\simeq \mathbb{Z}_p[[\mathbb{Z}_p \rtimes \mathbb{Z}_p]]\oplus \mathbb{Z}_p[[\mathbb{Z}_p \rtimes \mathbb{Z}_p]].$$
\end{theo}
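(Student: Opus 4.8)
The plan is to derive Theorem \ref{theo4} as a direct specialization of the general freeness criterion Theorem \ref{theo3} to the biquadratic field $K=\mathbb{Q}(\sqrt{pq},\sqrt{-d})$; the work reduces to checking that $K$ together with the data $S_p=\{\mathfrak{p}\}$, $S=\{\mathfrak{q}\}$ of the example meets every hypothesis of Theorem \ref{theo3}, and then to making the coefficient ring explicit. First I would record that $K$ is totally imaginary of degree $4$ over $\mathbb{Q}$: the generator $\sqrt{-d}$ with $d>0$ admits no real image under any embedding, so $r_1=0$ and $2r_2=[K:\mathbb{Q}]=4$, whence $r_2=2$. Thus the rank supplied by Theorem \ref{theo3} is exactly the asserted $2$.

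Next I would check the standing hypothesis $\mu_p\not\subset K$. If $\mu_p\subset K$ then $\mathbb{Q}(\mu_p)\subseteq K$, forcing $(p-1)\mid 4$; for $p>3$ this leaves only $p=5$, but $\mathbb{Q}(\mu_5)$ is cyclic of degree $4$ while $\mathrm{Gal}(K/\mathbb{Q})\cong(\mathbb{Z}/2\mathbb{Z})^2$, so $\mathbb{Q}(\mu_5)\neq K$; hence $\mu_p\not\subset K$. The two numbered conditions of Theorem \ref{theo3} are precisely the present hypotheses: since $S_p=\{\mathfrak{p}\}$ one has $\mathcal{U}_p=\prod_{v\in S_p}\mathcal{U}_v=\mathcal{U}_{\mathfrak{p}}$, so the assumption that $\imath_p(\mathcal{E}_S)$ be a direct summand of $\mathcal{U}_{\mathfrak{p}}$ is condition (1), and $s=1$ is condition (2).

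With the hypotheses verified, Theorem \ref{theo3} yields that $\mathcal{X}$ is a free $\Lambda(G_{\infty,S})$-module of rank $r_2=2$. It remains to name the coefficient ring. For this I would reuse the structural step inside the proof of Theorem \ref{theo3}: $p$-rationality of $K$ (hence of every layer $K_n$) together with $s=1$ gives $\mathfrak{X}_S(K_\infty)\simeq\mathbb{Z}_p$ and therefore $G_{\infty,S}\simeq\mathbb{Z}_p\rtimes\mathbb{Z}_p$, so $\Lambda(G_{\infty,S})=\mathbb{Z}_p[[\mathbb{Z}_p\rtimes\mathbb{Z}_p]]$. Freeness of rank $2$ then reads
$$\mathcal{X}\simeq\Lambda(G_{\infty,S})^{2}=\mathbb{Z}_p[[\mathbb{Z}_p\rtimes\mathbb{Z}_p]]\oplus\mathbb{Z}_p[[\mathbb{Z}_p\rtimes\mathbb{Z}_p]],$$
which is the claimed decomposition.

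Because the substantive input --- the injectivity of $\mathrm{Tor}_{\mathbb{Z}_p}G_{\Sigma}(K)^{ab}\to G_{\infty,S}^{ab}$ that drives the freeness, equivalently the vanishing of the torsion of the unit quotient in Proposition \ref{proposition7} --- is already furnished by Theorem \ref{theo3}, I expect no genuinely new difficulty. The only points needing care are the elementary exclusion of $\mu_p$ and the bookkeeping reduction $\mathcal{U}_p=\mathcal{U}_{\mathfrak{p}}$ coming from $S_p=\{\mathfrak{p}\}$; I would cross-check the latter against Proposition \ref{propostion4}, which characterizes $p$-rationality for this family and fixes the local situation at $p$.
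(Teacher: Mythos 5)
Your proposal is correct and coincides with the paper's (implicit) argument: the paper states Theorem \ref{theo4} without a separate proof precisely because it is the specialization of Theorem \ref{theo3} to this totally imaginary quartic field, where $r_1=0$ gives $r_2=2$, and the identification $G_{\infty,S}\simeq\mathbb{Z}_p\rtimes\mathbb{Z}_p$ (hence the coefficient ring $\mathbb{Z}_p[[\mathbb{Z}_p\rtimes\mathbb{Z}_p]]$) is exactly the structural step established inside the proof of Theorem \ref{theo3} from $p$-rationality and $s=1$. Your additional verifications --- $\mu_p\not\subset K$ for $p>3$ via the degree and Galois-group comparison with $\mathbb{Q}(\mu_5)$, and $\mathcal{U}_p=\mathcal{U}_{\mathfrak{p}}$ since $S_p=\{\mathfrak{p}\}$ --- are sound and merely make explicit what the paper leaves tacit.
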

\begin{ex}\label{example2}
Using Pari-GP (\cite{PARI}) we verified the following example where
$K=\mathbb{Q}(\sqrt{91},\sqrt{-2})$, $p=7$, $q=13$ and $d=2$. We
denote $K^{+}$ be the maximal real subfield of $K$, i.e.,
$K^{+}=\mathbb{Q}(\sqrt{91})$. We have $S_p=\{\mathfrak{p}\}$,
$S=\{\mathfrak{q}\}$ and
$\Sigma=\{\mathfrak{p},\mathfrak{q}\}$. Let $\mathfrak{p}|p$ and $\mathfrak{q}|q$.\\
Let $A_n$ (resp. $A_{n}^+$) be the $p$-part of the class group of
$K_n$ (resp. $K_{n}^+$) and $A_{n,S}$ (resp. $A_{n,S}^+$) the
$p$-part of the class group of $K_n$ (resp. $K_{n}^+$) of ray $S$.
For a large enough integer $n$, we have the following (see
\cite{Gras1})
$$\frac{\sharp A_{n,S}}{\sharp A_{n,S}^+}=\frac{\sharp A_n}{\sharp A_{n}^+}\prod_{v\in S(K_n)}p^{n\alpha_{S}+o(1)},$$
where $\alpha_{S}=\sum_{v\in S(K^+)}\alpha_v$ and $\alpha_v$ is $0$
or $1$ such that $\alpha_v=1$ if the following is satisfied
\begin{enumerate}
    \item $|\mathcal{U}_{K^{+}_{v}}|=1$,
    \item $|\mathcal{U}_{K_{v}}|\neq1$.
\end{enumerate}
In this case we have that $\alpha_S=1$.\\
The fundamental unit of the field $K^+$ is not a $p$-power in
$K^{+}_{\mathfrak{p}}$ and $p$ is prime to the class number of $K^+$
and the class number of $L$. Then by proposition \ref{propostion4}
the field $K$ is $p$-rational.\\
Since $A_{0}=\{1\}$ and $\alpha_{S}=1$, we have that the invariant
$\lambda_{S}$ of the module $\mathfrak{X}_S(K_{\infty})$ equals $1$,
hence we obtain that $d_p(A_{0,S})=1$, and using Nakayama lemma we
deduce the following isomorphism
$$\mathfrak{X}_{S}(K_{\infty})\simeq\mathbb{Z}_p\simeq \mathcal{W},$$
where $\mathcal{W}$ is a direct sum of $s$ copy of
$\mathbb{Z}_{p}(1)$ and $s$ is the number of places of $K_{\infty}$
above places in $S$ such that $K_{\infty,v}$ contain $\mu_p$, in
this case $s=1$.\\
We have that $K_{\mathfrak{q}}$ contains the $7$-th roots of unity.
It remains to prove that $\imath_7(\mathcal{E}_S)$ is a direct
summand of $\mathcal{U}_{\mathfrak{p}}$. We have
$K=\mathbb{Q}(\theta)$, where $\theta$ satisfy
$\theta^4-178\theta^2+8649=0$. In this case
$E_{K}=<\pm1,\varepsilon>$, with
$\varepsilon=55/62\theta^3-14905/62\theta+1574$. We verify that
$v_{\mathfrak{p}}(\varepsilon^2-1)=v_{\mathfrak{q}}(\varepsilon^2-1)=1$,
we have $\iota_7(\mathcal{E}_S)=\iota_7(<\varepsilon^2>)$ is a
direct summand of $\mathcal{U}_{\mathfrak{p}}$.\\
Then the $\Lambda(G_{\infty, S})$-module $\mathcal{X}$ is free of
rank $2$ :
$$\mathcal{X}\simeq \mathbb{Z}_7[[\mathbb{Z}_7 \rtimes \mathbb{Z}_7]]\oplus \mathbb{Z}_7[[\mathbb{Z}_7 \rtimes \mathbb{Z}_7]].$$
\end{ex}
Using corollary 3.2 of \cite{Gras2} we give an algorithm for testing
the $p$-rationality of a family of number fields of the form
$K=\mathbb{Q}(\sqrt{pq},\sqrt{-2})$, with $p$ and $q$ given as in
the example \ref{example1}. The algorithm gives also the primes
which satisfy the conditions of example \ref{example2}.\\ In the
following table we give numerical evidence for the existence of
fields $K$ which satisfy the conditions of theorem \ref{theo4}.
\begin{center}
\begin{tabular}{|*{2}{c|}}
    \hline
     $p$  & $q$  \\
    \hline
        &\{79,109,239,359,389,439,599,719,829,1039,1319,1429,1439,1879,2239,2269,2309,2399,2549\\
      5 &2719,2749,2789,2879,2909,2999,3079,3109,3229,3359,4079,4349,4519,4639,4679,4759,4919\\
        &5279,5309,5879,6079,6199,6359,6599,6679,6829,6959,7109,7559,7759,7829,8389,8429,8629\\
        &8719,8999,9199,9319,9479,9679,9719,9839,9949\}\\
    \hline
        &\{13,167,181,223,461,503,727,797,853,1021,1063,1231,1399,1511,1567,1637,1693,1847,1973\\
      7 &2029,2141,2351,2477,2687,3037,3527,3541,3709,3821,3863,3877,3919,4157,4423,4493,4549\\
        &4591,4703,5039,5333,5431,5501,5557,5879,6047,6173,6229,6271,6397,6719,6733,7013,7237\\
        &7349,7559,7573,7727,7853,7951,8287,8861,9239,9421,9463,9533,9743\}\\
    \hline
       &\{103,181,311,389,701,727,1039,1117,1637,1663,1871,1949,2053,2287,3119,3821,4133,4159\\
    13 &4679,4783,5303,5407,5693,5927,6343,6551,6863,6967,7487,7591,7669,8111,8293,8423,8839\\
       &9151,9463\}\\
       \hline
    23 &\{367,919,1103,1471,2069,2207,2437,2621,3541,3863,4093,4231,4783,4967,5197,5381,5519\\
       &5749,6301,6991,7589,7727,8647,8693,8831,9199,9613\}\\

     \hline
     29  & \{173,463,2029,2087,2551,4639,6263,6959,9221,9511,9743\}\\
     \hline

     31  &\{61,557,743,991,1301,1487,1549,2293,3037,3533,3719,3967,4463,5021,6199,7253,7687\\
     &8431,8741,9733\}\\
     \hline
     37 &\{887,2663,3847,4957,5623,6733,7103,7621,8287,9397,9767\}\\
     \hline
     47 &\{751,1597,1879,1973,3853,4229,5639,7237,8647,8741\}\\
     \hline
     53 &\{2543,2861,3391,4133,4663,5087,6359,7207\}\\
     \hline
     61 &\{487,853,1951,2927,4391,6709,8783\}\\
     \hline
     71 & \{709,1277,3407,5821,6247,6389,7951,8093\}\\
     \hline
     79 & \{157,631,2053,4423,7109,7583,7741,9479\}\\
     \hline
     101 & \{2423,7069,8887\}\\
     \hline
     103 & \{823,2677,4943,7621,9887\}\\
      \hline
      109 & \{653,5231,8501,8719\}\\
     \hline
     127 & \{3301,5333,9397\}\\
     \hline
    149 &\{ 2383,7151\}\\
    \hline
   151 &\{ 3623,4831,7247,7549\}\\
   \hline
   157 &\{ 941,3767,5023\}\\
   \hline
167 & \{1669,2671,4007,6679,7013\}\\
 \hline
173 &\{ 2767\}\\
 \hline
181 &\{ 1447,5791\}\\
 \hline
191 & \{4583,7639\}\\
 \hline
197 & \{1181,7879\}\\
 \hline
199 & \{397,3581,6367,9551,9949\}\\
 \hline
 223 & \{1783,4013,5351\}\\
 \hline
229 & \{1373,1831\}\\
 \hline
239 &\{2389,3823\}\\
 \hline
263 &\{4733,6311,8941\}\\
 \hline
271 &\{541,4877\}\\
 \hline
277 & \{3877,8863\}\\
 \hline
311 &\{3109\}\\
 \hline
317 &\{1901,7607\}\\
 \hline
349 & \{2791\}\\
\hline
359 &\{5743\}\\
\hline
367 &\{733,8807\}\\
\hline
373 & \{2237,8951\}\\
\hline
431 & \{7757\}\\
\hline
\end{tabular}
\end{center}

\end{document}